\newtheorem{teo}{Theorem}[section]
\newtheorem{prop}[teo]{Proposition}
\newtheorem{lema}[teo]{Lemma}
\newtheorem{coro}[teo]{Corollary}
	\theoremstyle{definition}
	\newtheorem{dfn}[teo]{Definition}
	\newtheorem{obs}[teo]{Remark}
	\newtheorem{ex}[teo]{Example}
\newcommand{\nd}{\noindent}
\newcommand{\vu}{\vspace{.1cm}}
\newcommand{\vd}{\vspace{.2cm}}
\newcommand{\Apd}{\leftharpoonup}
\newcommand{\Ape}{\rightharpoonup}
\newcommand\Ve\varepsilon
\renewcommand{\H}{{\mathcal{H}}}
\newcommand{\U}{{^{+1}}}
\newcommand{\Z}{{^{+0}}}
\newcommand{\Up}{{{}^{+\bar 1}}}
\newcommand{\Zp}{{{}^{+\bar 0}}}
\newcommand{\Um}{{^{-1}}}
\newcommand{\Zm}{{^{-0}}}
\newcommand{\Upm}{{{}^{-\bar 1}}}
\newcommand{\Zpm}{{{}^{-\bar 0}}}
\newcommand{\Ref}[1]{{\stackrel{(\ref{#1})}{=}}}
\newcommand{\Refie}[2]{{\stackrel{\ref{#1}(\ref{#2})}{=}}}
\newcommand{\rhop}{{\bar{\rho}}}
\newcommand{\ldap}{{\bar{\lambda}}}
\newcommand{\Sym}{{\stackrel{\mathrm{sym}}{=}}}
\newcommand{\Not}[2][0]{
	\setcounter{enumi}{#1}
	\renewcommand{\theenumi}{#2\arabic{enumi}}
	\renewcommand{\labelenumi}{(\theenumi)}
	\setlength{\itemindent}{\widthof{#2}}
	\setlength{\itemsep}{4pt}
}
\begin{document}\frenchspacing
	\title[Partial bi(co)module algebra]{Partial bi(co)module algebras, globalizations and partial (L,R)-smash products }

	\author[Castro]{Felipe Castro}
	\address{Instituto de Matem\'atica, Universidade Federal do Rio Grande do Sul,
		91509-900, Porto Alegre, RS, Brazil} \email{f.castro@ufrgs.br}
	\author[Paques]{Antonio Paques}
	\address{Instituto de Matem\'atica, Universidade Federal do Rio Grande do Sul,
		91509-900, Porto Alegre, RS, Brazil} \email{paques@mat.ufrgs.br}
	\author[Quadros]{Glauber Quadros}
	\address{Instituto de Matem\'atica, Universidade Federal do Rio Grande do Sul,
		91509-900, Porto Alegre, RS, Brazil} \email{glauber.quadros@ufrgs.br}
	\author[Sant'Ana]{Alveri Sant'Ana}
	\address{Instituto de Matem\'atica, Universidade Federal do Rio Grande do Sul,
		91509-900, Porto Alegre, RS, Brazil} \email{alveri@mat.ufrgs.br}

\thanks{{The first and the third authors were partially supported by CNPq, Brazil}.\\{\bf  Mathematics Subject Classification}:
Primary 16S40; Secondary 16T99  .\\
{\bf Key words and phrases:} {Partial Hopf action; Partial bi(co)module algebra; Globalization; Partial $(L,R)$-smash product.}}

	\begin{abstract}
		In this paper we introduce the notions of partial bimodule algebra and partial bicomodule algebra. We also deal with the existence of
globalizations for these structures, generalizing related results appeared in \cite{AB, AB2}.
As an application we construct the partial $(L,R)$-smash product, extending the corresponding global notion appeared in \cite{PO1} to the context of partial Hopf actions.
	\end{abstract}
	
	\maketitle
	
	\tableofcontents
%%%%%%%%%%%%%%%%%%%%%%%%%%%%%%%%%%%%%%%%%%%%%%%%%%%%%%%%%%%%%%%%%%%%%%%%%%%%%%%%%%%%%%%%%%%%
%%%%%%%%%%%%%%%%%%%%%%%%%%%%%%%%%%%%%%%%%%%%%%%%%%%%%%%%%%%%%%%%%%%%%%%%%%%%%%%%%%%%%%%%%%%%
%%%%%%%%%%%%%%%%%%%%%%%%%%%%%%%%%%%%%%%%%%%%%%%%%%%%%%%%%%%%%%%%%%%%%%%%%%%%%%%%%%%%%%%%%%%%
%%%%%%%%%%%%%%%%%%%%%%%%%%%%%%%%%%%%%%%%%%%%%%%%%%%%%%%%%%%%%%%%%%%%%%%%%%%%%%%%%%%%%%%%%%%%
%%%%%%%%%%%%%%%%%%%%%%%%%%%%%%%%%%%%%%%%%%%%%%%%%%%%%%%%%%%%%%%%%%%%%%%%%%%%%%%%%%%%%%%%%%%%
\section{Introduction}${}$
%%%%%%%%%%%%%%%%%%%%%%%%%%%%%%%%%%%%%%%%%%%%%%%%%%%%%%%%%%%%%%%%%%%%%%%%%%%%%%%%%%%%%%%%%%%%

\vd

Partial action of groups on algebras was introduced in the literature by R. Exel in \cite{E}. His main purpose in that paper was to
develop a method that allowed to describe the structure of $C^*$-algebras under actions of the circle group. The first approach of
partial group actions on algebras, in a purely algebraic context, appears later in a paper by M. Dokuchaev and R. Exel \cite{DE}.

\vu

Partial group actions can be easily obtained by restriction from the global ones, and this fact stimulated the interest on knowing under
what conditions (if any) a given partial group action is of this type. In the topological context this question was dealt with by F. Abadie
in \cite{abadie}. The algebraic version of a globalization (or enveloping action) of a partial group action, as well as, the study about its existence,
was also considered by Dokuchaev and Exel in \cite{DE}. A nice approach on the relevance of the relationship between partial and global
group actions, in several branches of mathematics, can be seen in \cite{D}.

\vu

Actions and coactions on algebras are an important part of the theory of Hopf algebras. Hence, as a natural task, S. Caenepeel and
K. Janssen \cite{CJ} extended the  notion of partial group action to the setting of Hopf algebras and developed a theory of partial
(co)actions of Hopf algebras. Based on the Caenepeel-Janssen's work, M. Alves and E. Batista in \cite{AB, AB1} showed that every
partial action of a Hopf algebra has a globalization.

\vu

The notion of $(L,R)$-smash product, in the context of global actions, was introduced and studied in a series of papers \cite{BBM1,BBM2,BGGS,BS},
in a first approach for cocommutative Hopf algebra, with motivation and examples coming from the theory of deformation quantization.
In \cite{PO1}, this notion was extended to the case of arbitrary bialgebras. Actually, in \cite{PO1} the authors introduce a more general
construction: they define an $(L,R)$-smash product $A\natural\bar{A}$, where $A$ is a bimodule algebra and $\bar{A}$ is a bicomodule algebra,
which motivated the main purposes of this paper, namely: to extend these notions of bi(co)module algebras to the context of partial
Hopf (co)actions and, as an application, to construct new associative algebras coming from partial (co)actions, namely,
the corresponding partial $(L,R)$-smash products, as well as, to deal with the existence of globalizations for these structures,
generalizing, in particular, the results in \cite{AB, AB2} concerning to this subject.

\vu

In the next section we recall the basic notions of partial Hopf (co)module algebras and introduce the notions of partial Hopf
bi(co)module algebras, illustrating them by several examples. In the Section 3 we deal with the globalization of these
structures, as well as, with the correspondence between partial $H$-bicomodule algebras and partial $H^0$-bimodule algebras,
showing, in particular, that  the globalizations of a partial $H^0$-bimodule algebra and of a partial $H$-bicomodule algebra
are isomorphic as $H^0$-bimodules, whenever $H^0$ separates points. In the last section we construct, as an application, the
partial $(L,R)$-smash product of a partial $H$-bimodule algebra with a partial $H$-bicomodule algebra.

\vu

Throughout, $H$ denotes a Hopf algebra over a field $\Bbbk$, with antipode $S$ and coalgebra structure given by the comultiplication
$\Delta:H\to H\otimes H, h\mapsto h_1\otimes h_2$, (Sweedler's notation with summation understood) and the counit $\varepsilon:H\to K$.
We will denote by $H^0$ its finite dual. Algebras are always considered over $\Bbbk$ and are associative but not necessarily unital.
By linear maps we mean $\Bbbk$-linear maps. Unadorned $\otimes$ means $\otimes_\Bbbk$.
	
\vd

%%%%%%%%%%%%%%%%%%%%%%%%%%%%%%%%%%%%%%%%%%%%%%%%%%%%%%%%%%%%%%%%%%%%%%%%%%%%%%%%%%%%%%%%%%%%
%%%%%%%%%%%%%%%%%%%%%%%%%%%%%%%%%%%%%%%%%%%%%%%%%%%%%%%%%%%%%%%%%%%%%%%%%%%%%%%%%%%%%%%%%%%%
\section{Partial Hopf bi(co)module algebras}
%%%%%%%%%%%%%%%%%%%%%%%%%%%%%%%%%%%%%%%%%%%%%%%%%%%%%%%%%%%%%%%%%%%%%%%%%%%%%%%%%%%%%%%%%%%%

\vu

We start this section recalling the basic and known definitions of (global/partial) Hopf (co)module algebra structures,
before to introduce the notion of a partial bi(co)module algebra.

\vu

%%%%%%%%%%%%%%%%%%%%%%%%%%%%%%%%%%%%%%%%%%%%%%%%%%%%%%%%%%%%%%%%%%%%%%%%%%%%%%%%%%%%%%%%%%%%
%%%%%%%%%%%%%%%%%%%%%%%%%%%%%%%%%%%%%%%%%%%%%%%%%%%%%%%%%%%%%%%%%%%%%%%%%%%%%%%%%%%%%%%%%%%%
\subsection{Partial Hopf actions}${}$
%%%%%%%%%%%%%%%%%%%%%%%%%%%%%%%%%%%%%%%%%%%%%%%%%%%%%%%%%%%%%%%%%%%%%%%%%%%%%%%%%%%%%%%%%%%%

\vd

\begin{dfn}\label{lma}
	An algebra $A$ is a \emph{left $H$-module algebra} if there exists a linear map $\triangleright:H\otimes A\to A$ denoted by $\triangleright(h\otimes a)=h\triangleright a$ such that:
	\begin{enumerate} \Not{LMA}
		\item $1_H\triangleright a=a,$
		\item $h\triangleright (a b)=(h_1\triangleright a)(h_2\triangleright b),$
		\item $h\triangleright (g\triangleright a)=h g\triangleright a$,
	\end{enumerate}
\end{dfn}
\noindent for all $a,b \in A$ and $g,h \in H$.

\vd

In a similar way, one defines a right $H$-module algebra. Recalling from \cite{CJ} (see also \cite{AB}), we have the definition of a partial Hopf  module algebra.

\begin{dfn}\label{lpma}
	An algebra $A$ is a \emph{left partial $H$-module algebra} if there exists a linear map $\Ape:H\otimes A \to A$, denoted by $\Ape(h\otimes a)=h\Ape a$, such that:
	\begin{enumerate} \Not{LPMA}
		\item $1_H\Ape a=a$,\label{lpma-1}
		\item $h\Ape[a(g\Ape b)]=(h_1\Ape a)(h_2g\Ape b)$,\label{lpma-2}
	\end{enumerate}
	\nd for all $a,b \in A$ and $g,h \in H$.
	
	\vd
	
	\nd We say that $A$ is \emph{symmetric} if the following additional condition holds:
	
	\vu
	\begin{enumerate} \Not[2]{LPMA}
		\item $h\Ape[(g\Ape b)a]=(h_1g\Ape b)(h_2\Ape a)$\label{lpma-3},
	\end{enumerate}
	\nd for all $a,b \in A$ and $g,h \in H$.
	
	\vd
	
	\nd The map $\Ape$ is called a \emph{left partial action of $H$ on $A$}, and we will also denote a left partial $H$-module algebra by the pair $(\Ape, A)$.
\end{dfn}

\vu

A (symmetric) right partial module algebra is defined in a similar way:

\vu

\begin{dfn}\label{rpma}
	An algebra $A$ is a \emph{right partial $H$-module algebra} if there exists a linear map $\Apd:A\otimes H \to A$, denoted by $\Apd(a\otimes h)=a\Apd h$ such that:
	
	\vu
	
	\begin{enumerate}\Not{RPMA}
		\item $a\Apd 1_H=a$;\label{rpma-1}
		\item $[(b\Apd g)a]\Apd h=(b\Apd gh_1)(a\Apd h_2)$,\label{rpma-2}
	\end{enumerate}
	\nd for all $a,b \in A$ and $g,h \in H$.
	
	\vu
	
	\nd And, it is \emph{symmetric} if the following additional condition holds
	
	\vu
	
	\begin{enumerate}\Not[2]{RPMA}
		\item $[a(b\Apd g)]\Apd h=(a\Apd h_1)(b\Apd gh_2)$,\label{rpma-3}
	\end{enumerate}
	
	\nd for all $a,b \in A$ and $g,h \in H$.
	
	\vd
	
	\nd The map $\Apd$ is called a \emph{right partial action of $H$ on $A$}, and we will also denote a partial right $H$-module algebra by the pair $(A, \Apd)$.
\end{dfn}

\begin{obs}\label{defcaen}
Definition \ref{lpma} was introduced by Caenepeel and Janssen in \cite{CJ}. It is a little bit different from the one given by Alves and Batista in \cite{AB}, because for the first authors, the algebra $A$ does not need to be unital.  %for this last the algebra does not need to be unital.
However when the algebra is unital the definitions are equivalent. Indeed,
if $A$ is a unital algebra, with identity element $1_A$, the conditions \ref{lpma-1} and \ref{lpma-2} of Definition \ref{lpma} are equivalent to the following ones:
	
	\vu
	
	\begin{enumerate}\Not{LPMA2.}
	    \item $1_H\Ape a=a$,
		\item $h\Ape(ab)=(h_1\Ape a)(h_2\Ape b)$,\label{lpma-2.1}	
		\item $h\Ape(g\Ape b)=(h_1\Ape 1_A)(h_2g\Ape b)$,\label{lpma-2.2}
	\end{enumerate}
	\nd for all $a,b \in A$ and $g,h \in H$.
	
	\vd
     In particular,  $\Ape$ is global if and only if $h\Ape 1_A=\varepsilon(h)1_A$,
	for all $h\in H$.
   An analogous remark also holds for right partial $H$-module algebras.
\end{obs}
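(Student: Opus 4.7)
The plan is to prove each equivalence by direct specialization in one direction and a coassociativity argument in the other, then deduce the global criterion from (LPMA2.3) via the counit axiom.

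For the forward implication, I assume (LPMA1) and (LPMA2). Condition (LPMA2.1) coincides with (LPMA1). To derive (LPMA2.2), I set $g=1_H$ in (LPMA2); using (LPMA1) to simplify $1_H\Ape b=b$ and $h_2 \cdot 1_H=h_2$, the identity becomes $h\Ape(ab)=(h_1\Ape a)(h_2\Ape b)$. To derive (LPMA2.3), I set $a=1_A$ in (LPMA2); unitality of $A$ absorbs $1_A$ into $g\Ape b$, giving $h\Ape(g\Ape b)=(h_1\Ape 1_A)(h_2 g\Ape b)$.

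For the backward implication, I assume (LPMA2.1)--(LPMA2.3) and start from $h\Ape[a(g\Ape b)]$. Applying (LPMA2.2) with first factor $a$ and second factor $g\Ape b$ yields $(h_1\Ape a)(h_2\Ape(g\Ape b))$. Applying (LPMA2.3) to the second factor gives $(h_1\Ape a)(h_{21}\Ape 1_A)(h_{22}g\Ape b)$. Coassociativity rewrites $h_1\otimes h_{21}\otimes h_{22}$ as $h_{11}\otimes h_{12}\otimes h_2$, producing $(h_{11}\Ape a)(h_{12}\Ape 1_A)(h_2g\Ape b)$; a final use of (LPMA2.2) applied to $a$ and $1_A$, together with $a\cdot 1_A=a$, collapses the first two factors to $h_1\Ape a$, recovering (LPMA2).

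The ``in particular'' claim is immediate from (LPMA2.3). If $h\Ape 1_A=\varepsilon(h)1_A$, then (LPMA2.3) reduces, via the counit axiom $\varepsilon(h_1)h_2=h$, to $h\Ape(g\Ape b)=hg\Ape b$, which together with (LPMA2.1) and (LPMA2.2) yields all axioms of Definition~\ref{lma}, so $\Ape$ is global. Conversely, a global action is an $H$-module algebra in the standard sense and therefore automatically satisfies $h\Ape 1_A=\varepsilon(h)1_A$. The only mild subtlety in the entire argument is keeping the Sweedler indices straight when invoking coassociativity in the backward step; everything else is routine specialization.
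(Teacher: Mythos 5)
Your argument is correct, and it is exactly the routine specialization/coassociativity computation that the paper leaves implicit (the Remark is stated with no written proof, so there is nothing to diverge from): $g=1_H$ and $a=1_A$ give (LPMA2.\ref{lpma-2.1}) and (LPMA2.\ref{lpma-2.2}), and the reverse direction correctly recombines $(h_{11}\Ape a)(h_{12}\Ape 1_A)=h_1\Ape a$ via coassociativity. The only place you gloss is the ``only if'' half of the globality criterion: with the paper's Definition~\ref{lma}, which does \emph{not} include the axiom $h\triangleright 1_A=\varepsilon(h)1_A$, this is not ``automatic'' by definition but follows from the antipode, e.g.\ $\varepsilon(h)1_A=(h_1S(h_2))\Ape 1_A=h_1\Ape(S(h_2)\Ape 1_A)=(h_1\Ape 1_A)(h_2S(h_3)\Ape 1_A)=(h\Ape 1_A)1_A=h\Ape 1_A$; similarly, in the ``if'' half one should note that $1_A(hg\Ape b)=hg\Ape b$ (which follows from $hg\Ape b=hg\Ape(1_Ab)$ and the hypothesis on $1_A$) before concluding $h\Ape(g\Ape b)=hg\Ape b$. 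Neither point affects the validity of your proof.
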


\vu

In the sequel, we will present some examples of partial module algebras. We will see the relation between the unital partial actions of a group $G$
on an algebra $A$ and the symmetric partial actions of $\Bbbk G$ on the same algebra. We will also present an easy method to construct partial
module algebra from global ones. Actually, every partial Hopf action on unital algebras is of this type.

\vu

\begin{ex}\label{apG<=>apskG}
	(\cite[Proposition 4.9]{CJ}) There is a bijective correspondence between unital partial actions of a finite group $G$ on a unital algebra $A$ and symmetric
left partial actions of ${\Bbbk}G$ on $A$.

\vu
	
	By a unital partial action $\alpha$ of $G$ on $A$, we mean a collection of unital ideals $D_g$ of $A$, with identity element $1_g$, $g\in G$, and
algebra isomorphisms $\alpha_g:D_{g^{-1}}\to D_g$ such that
	\begin{enumerate}
		\item $D_1 = A$ and $\alpha_1$ is the identity automorphism $I_R$ of $A$,
		\item $\alpha_g(D_{g^{-1}}\cap D_h) = D_g\cap D_{gh}$,
		\item $\alpha_g \circ \alpha_h(r) = \alpha_{gh}(r)$, for every $r\in D_{h^{-1}} \cap D_{(gh)^{-1}}$.
	\end{enumerate}
	 Such a partial action determines a symmetric left partial action of $\Bbbk G$ on $A$ via the linear map $\Ape:{\Bbbk}G\otimes A \to A$ defined
    by $g\Ape a=\alpha_g(a1_{g^{-1}})$.

\vu
	
	Conversely, given any symmetric left partial action $\Ape$ of ${\Bbbk}G$ on $A$, the
	unital partial action of $G$ on $A$ is determined by the algebra isomorphisms  $\alpha_g:D_{g^{-1}}\to D_g$ defined by $\alpha_g(a1_{g^{-1}})=g\Ape a$,
where $1_g=g\Ape 1_A$ and $D_g=A1_g=1_gA$, for all $g\in G$.
\end{ex}

\vu

\begin{ex}\label{exemplo-eN}
	(see \cite{AB1}) This example, not coming from partial group actions, is obtained as a restriction of the global action of
 ${\Bbbk}G^\ast = Hom({\Bbbk}G,{\Bbbk})$ on ${\Bbbk}G$, given by $p_g \triangleright h = \delta_{g,h} h$, where $G$ is a finite group and $\{p_g\}_{g\in G}$
 is the dual basis for ${\Bbbk}G^\ast$.
	
	\vu
	
	Consider a normal subgroup $N$ of $G$, $N\neq\{1\}$, such that the characteristic of ${\Bbbk}$ does not divide the order of $N$. Take the central
idempotent $e_N\in {\Bbbk}G$ given by $$e_N=\frac{1}{|N|}\sum\limits_{n\in N}n$$ and consider $A = e_N{\Bbbk}G$. Clearly, $A$ is an ideal of ${\Bbbk}G$
and a unital algebra with $1_A = e_N$. The left partial action of ${\Bbbk}G^\ast$ over $A$ is given by
	$$
	p_g\Ape e_N h = 1_A(p_g\triangleright e_N h)=
		\begin{cases}
			\frac{1}{|N|}e_N h, &\text{if}\ g^{-1}h \in N\\
			0, &\text{otherwise}
		\end{cases}
	$$
	for all $h\in G$ and $g\in G$. Notice that $p_g\Ape 1_A \neq \varepsilon_{_{\Bbbk G^\ast}} (p_g)1_A$. Hence, this action is not global indeed.
\end{ex}

\vu

This above  construction is, in fact, a particular illustration of the general method to construct partial Hopf actions from global ones, which we see
in the next example (see \cite{AB, AB2}). We will consider left partial Hopf actions, for the right ones one proceeds in a similar way.

\vu

\begin{ex}\label{induced}
	Let $B$ be a left $H$-module algebra via the action $h \otimes b\mapsto h\triangleright b$, for all $b\in B$ and $h\in H$. If $A$ is a right ideal
generated by an idempotent $1_A$ such that it also is the identity element of $A$, then $A$ becomes a left partial $H$-module algebra via the partial action
	$$h\Ape a=1_A(h\triangleright a).$$
	
	Conversely, any partial action of $H$ on a unital algebra $A$ is of this type \cite[Theorem 1]{AB}.
\end{ex}

\vu

\begin{dfn}
	Let $H$ be a Hopf algebra and $A$ an algebra. We say that $A$ is a \emph{partial $H$-bimodule algebra} if
	there exist linear maps $\Ape:H\otimes A\to A$ and $\Apd:A\otimes H \to A$ such that $(\Ape, A)$
	is a left partial $H$-module algebra, $(A, \Apd)$ is a right partial $H$-module algebra and the following relation
	of compatibility is satisfied: $$h\Ape(a\Apd g) = (h\Ape a)\Apd g.$$

\vu

We will denote such a partial $H$-bimodule algebra by the triple $(\Ape, A, \Apd)$ or simply by $A$ if there is no risk of confusion. Notice that
any left or right partial $H$-module algebra can be seen as a partial $H$-bimodule algebra. Indeed, it suffices to take one of the partial actions as being the trivial action induced by $\varepsilon$.

\end{dfn}

We end this subsection with some illustrative  examples.

\begin{ex}
	Let $A$ be an algebra. Then, $\H = Hom_{\Bbbk}(H \otimes H, A)$ is an $H$-bimodule algebra with the actions given by
	$$
		\begin{array}{rcl}
			\triangleright: H \otimes \H & \to & \H\\
			g \otimes f & \mapsto & [g \triangleright f](h \otimes k) = f(h g \otimes k)
		\end{array}
	$$
	and
	$$
		\begin{array}{rcl}
			\triangleleft: \H \otimes H & \to & \H \\
			f \otimes g & \mapsto & [f \triangleleft g](h \otimes k) = f(h \otimes gk).
		\end{array}
	$$
\end{ex}

\begin{ex}
	Let $H_4 = \langle 1,g,x, xg\, |\, g^2=1, \ x^2 =0, \ xg=-gx \rangle$ be the Sweedler's algebra over the field ${\Bbbk}$. Then, ${\Bbbk}$ becomes a
partial $H_4$-bimodule algebra via the following linear maps
	$$
		\begin{array}{rcl@{\hspace*{1.5cm}}c@{\hspace*{1.5cm}}rcl}
			\Ape: H_4 \otimes {\Bbbk} & \to & {\Bbbk} &  & \Apd: {\Bbbk} \otimes H_4 & \to & {\Bbbk} \\
			1 \otimes 1_{\Bbbk} & \mapsto & 1_{\Bbbk} & & 1_{\Bbbk} \otimes 1 & \mapsto & 1_{\Bbbk} \\
			g \otimes 1_{\Bbbk} & \mapsto & 0 & \mathrm{and} & 1_{\Bbbk} \otimes g & \mapsto & 0 \\
			x \otimes 1_{\Bbbk} & \mapsto & r & & 1_{\Bbbk} \otimes x & \mapsto & s \\
			xg \otimes 1_{\Bbbk} & \mapsto & -r & & 1_{\Bbbk} \otimes xg & \mapsto & -s
		\end{array}
	$$
	for fixed $r,s \in {\Bbbk}$.
\end{ex}

\vu

Now we have an important example of partial bimodule algebra. In the same way we did in Example \ref{induced}, we can define an induced partial bimodule algebra from a global one as follows:

\vu

\begin{ex}\label{bi-induzida}
	Let $B$ be an $H$-bimodule algebra and $A$ a unital subalgebra of $B$ ($1_A$ not necessarily equal to $1_B$) such that for all $a,b\in A$ the following equality holds
	\begin{equation}
		(a \triangleleft h) (k \triangleright b) = (a \triangleleft h) 1_A (k \triangleright b)\label{cond-bi-induzida}
	\end{equation}
	and both of the sides  lie in $A$, where $\triangleleft$ and $\triangleright$ denotes the global actions of $H$ on $B$.
	
	\vu
	
	Then, $A$ becomes a partial $H$-bimodule algebra by the induced partial actions given by
	$$
		\begin{array}{rcl}
			\Ape\colon H\otimes A & \to & A\\
			h \otimes a & \mapsto & h\Ape a = 1_A (h \triangleright a)
		\end{array}
	$$
	
	$$
		\begin{array}{rcl}
			\Apd\colon A \otimes H & \to & A\\
			a \otimes h & \mapsto & a\Apd h = (a \triangleleft h) 1_{A} .
		\end{array}
	$$
	
	First of all, note that $\Ape$ and $\Apd$ are well defined using (\ref{cond-bi-induzida}). Clearly $A$ is a left and right partial $H$-module algebra, so it just remains to show the compatibility. In fact, for all $h,k \in H$ and $a \in A$,
	$$
	\begin{array}{rcl}
		h\Ape (a\Apd k)
		& = & 1_A (h\triangleright(a\Apd k))\\
		& = & 1_A (h\triangleright(a\Apd k))1_A\\
		& = & 1_A [h_1\triangleright(a\Apd k)(S(h_2)\triangleright 1_A)]\\
		& = & 1_A [h_1\triangleright(a\triangleleft k)1_A(S(h_2)\triangleright 1_A)]\\
		& = & 1_A [h_1\triangleright(a\triangleleft k)(S(h_2)\triangleright 1_A)]\\
		& = & 1_A (h\triangleright(a\triangleleft k))1_A\\
		& = & 1_A ((h\triangleright a)\triangleleft k)1_A\\
		& = & [(1_A \triangleleft S(k_1))(h\triangleright a)\triangleleft k_2]1_A\\
		& = & [(1_A \triangleleft S(k_1))1_A(h\triangleright a)\triangleleft k_2]1_A\\
		& = & [(1_A \triangleleft S(k_1))(h\Ape a)\triangleleft k_2]1_A\\
		& = & 1_A ((h\Ape a)\triangleleft k)1_A\\
		& = & 1_A[(h\Ape a)\Apd k]\\
		& = & (h\Ape a)\Apd k.
	\end{array}
	$$
	
	Therefore, $A$ is a partial $H$-bimodule algebra.

\vu

We will see in Subsection 3.1 that every partial $H$-bimodule algebra is of the type described above.
\end{ex}

\vd

%%%%%%%%%%%%%%%%%%%%%%%%%%%%%%%%%%%%%%%%%%%%%%%%%%%%%%%%%%%%%%%%%%%%%%%%%%%%%%%%%%%%%%%%%%%%
%%%%%%%%%%%%%%%%%%%%%%%%%%%%%%%%%%%%%%%%%%%%%%%%%%%%%%%%%%%%%%%%%%%%%%%%%%%%%%%%%%%%%%%%%%%%
\subsection{Partial Hopf coactions}${}$
%%%%%%%%%%%%%%%%%%%%%%%%%%%%%%%%%%%%%%%%%%%%%%%%%%%%%%%%%%%%%%%%%%%%%%%%%%%%%%%%%%%%%%%%%%%%

\vd

First, we remember the definition of right $H$-comodule algebra.

\begin{dfn}\label{rca}
	A unital algebra $A$ is said a \emph{right $H$-comodule algebra} via the linear map $\rho:A \to A \otimes H$ if the following conditions hold:
	\begin{enumerate}\Not{RCA}
		\item $(I_A \otimes \Ve) \rho (a) = a$,\label{rca-1}
		\item $\rho (a b) = \rho(a) \rho(b)$,\label{rca-2}
		\item $(\rho \otimes I_H) \rho (a) = (I_A \otimes \Delta) \rho(a)$,\label{rca-3}
	\end{enumerate}
	for all $a, b\in A$. We will denote $\rho(a) = a\Z \otimes a\U$ (summation understood).
The map $\rho$ is called a \emph{right partial coaction of $H$ on $A$}, and we will also denote a right $H$-comodule algebra by the pair $(A, \rho)$.
\end{dfn}
\nd Left $H$-comodule algebras can also be defined in a similar way.

\vu

It is usual in the corresponding classical theory to put the additional axiom $\rho(1_A) = 1_A \otimes 1_H$ in the definition of an $H$-comodule algebra. Actually, this
is necessary whenever $H$ is only a bialgebra. In the more restrict case of Hopf algebras, this is a consequence from the above axioms and the existence of the antipode,
as follows in the next proposition.

\begin{prop}
	Let $A$ be a right $H$-comodule algebra. Then $\rho(1_A) = 1_A \otimes 1_H$.
\end{prop}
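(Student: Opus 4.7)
The plan is to set $e := \rho(1_A) \in A\otimes H$ and to show $e = 1_A\otimes 1_H$ by combining the multiplicativity of $\rho$, its coassociativity, and the antipode of $H$.

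First I would observe that from multiplicativity (axiom \ref{rca-2}) applied to $1_A = 1_A\cdot 1_A$ we obtain $e^2 = e$, and more generally, for every $a\in A$,
\[
e\cdot\rho(a) \;=\; \rho(1_A)\rho(a) \;=\; \rho(1_A\cdot a) \;=\; \rho(a),
\]
so $e$ acts as a left identity on the subalgebra $\rho(A)\subseteq A\otimes H$.

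The crucial intermediate step is the identity
\[
\rho(1\Z)\bigl(1_A \otimes S(1\U)\bigr) \;=\; 1_A\otimes 1_H. \qquad (\star)
\]
To get $(\star)$, I would start from coassociativity $(\rho\otimes I_H)\rho(1_A) = (I_A\otimes\Delta)\rho(1_A)$ in $A\otimes H\otimes H$, apply $I_A\otimes I_H\otimes S$, and multiply the last two tensor factors using $m_H$. The right-hand side collapses to $1\Z\otimes\varepsilon(1\U)1_H = 1_A\otimes 1_H$ via the antipode identity $h_1 S(h_2) = \varepsilon(h)1_H$ together with the counit axiom \ref{rca-1}, while the left-hand side is exactly $\rho(1\Z)(1_A\otimes S(1\U))$.

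To conclude, I multiply $(\star)$ on the left by $e$. On one side, $e\cdot (1_A\otimes 1_H) = e$. On the other, using associativity in $A\otimes H$ together with $e\cdot\rho(a) = \rho(a)$ applied termwise under the Sweedler sum for $\rho(1\Z)$, we get
\[
e\cdot\rho(1\Z)\bigl(1_A\otimes S(1\U)\bigr) \;=\; \rho(1\Z)\bigl(1_A\otimes S(1\U)\bigr) \;\stackrel{(\star)}{=}\; 1_A\otimes 1_H.
\]
Hence $e = 1_A\otimes 1_H$, i.e.\ $\rho(1_A) = 1_A\otimes 1_H$. The only delicate point in the argument is the coassociativity-plus-antipode manipulation producing $(\star)$; everything else is bookkeeping with the idempotent action of $e$ on $\rho(A)$.
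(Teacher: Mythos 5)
Your argument is correct and is essentially the paper's own proof in reorganized form: the paper runs a single equational chain $1_A\otimes 1_H = 1\Z\otimes 1\U{}_1S(1\U{}_2) = \rho(1\Z)(1_A\otimes S(1\U)) = \rho(1_A)\rho(1\Z)(1_A\otimes S(1\U)) = \rho(1_A)$, which is exactly your identity $(\star)$ followed by left multiplication by $e=\rho(1_A)$ using $e\rho(a)=\rho(a)$. Both proofs use the same ingredients (counit axiom, the antipode identity $h_1S(h_2)=\varepsilon(h)1_H$, coassociativity, and multiplicativity applied to $1\Z = 1_A\cdot 1\Z$), so there is nothing to add.
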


\begin{proof}In fact,
	$$
	\begin{array}{rcl}
	1_A \otimes 1_H & = & 1^{+0} \Ve(1^{+1}) \otimes 1_H \\
	& = &  1^{+0}  \otimes \Ve(1^{+1}) 1_H \\
	& = & 1^{+0} \otimes 1^{+1}{_1} S(1^{+1}{_2}) \\
	& = & 1^{+0+0} \otimes 1^{+0+1} S(1^{+1}) \\
	& = & (1_A 1^{+0})^{+0} \otimes (1_A 1^{+0})^{+1} S(1^{+1}) \\
	& = & 1^{+0'} 1^{+0+0} \otimes 1^{+1'} 1^{+0+1} S(1^{+1}) \\
	& = & 1^{+0'} 1^{+0} \otimes 1^{+1'} 1^{+1}{_1} S(1^{+1}{_2}) \\
	& = & 1^{+0'} 1^{+0}  \otimes 1^{+1'} \Ve(1^{+1}) \\
	& = & 1^{+0'} 1^{+0}\Ve(1^{+1})  \otimes 1^{+1'} \\
	& = & 1^{+0'} \otimes 1^{+1'} \\
	& = & \rho(1_A).
	\end{array}
	$$
\end{proof}

\vu

As the notion of partial module algebras generalizes module algebras, the notion of a partial comodule algebra arises as a generalization of a comodule algebra (see \cite{CJ} and \cite{AB}).

\begin{dfn}\label{rpca}
	A pair $(A, \rho)$, with $A$ a unital algebra and $\rho:A \to A \otimes H$ a linear map, is a \emph{right partial $H$-comodule algebra} if
	\begin{enumerate}\Not{RPCA}
		\item $\rho(ab) = \rho(a) \rho(b)$,\label{rpca-1}
		\item $(I_A \otimes \varepsilon)\rho(a)  = a$,\label{rpca-2}
		\item $(\rho \otimes I_H)\rho (a) = (\rho(1_A) \otimes 1_H)[(I_A \otimes \Delta)\rho (a)]$,\label{rpca-3}
	\end{enumerate} for all $a,b\in A$.
The coaction is said to be \emph{symmetric} if  the following additional condition is also satisfied:
	\begin{enumerate}\Not[3]{RPCA}
		\item $(\rho \otimes I_H)\rho (a) = [(I_A \otimes \Delta)\rho (a)](\rho(1_A) \otimes 1_H)$.\label{rpca-4}
	\end{enumerate}
	
	\nd If there is no risk of confusion we simply write $A$ instead of $(A,\rho)$ to denote a right partial $H$-comodule algebra. The map $\rho$ is called a \emph{right partial coaction of $H$ on $A$}.
\end{dfn}

For the left partial coactions, the axioms are similar.

\vu

\begin{dfn}\label{lpca}
	A pair $(\lambda, A)$, with  $A$ a unital algebra and $\lambda:A \to H\otimes A$ a linear map,  is called a \emph{left partial $H$-comodule algebra} if
	\begin{enumerate}\Not{LPCA}
		\item $(\varepsilon \otimes I_A) \lambda(a) = a$;\label{lpca-1}
		\item $\lambda(ab) = \lambda(a)\lambda(b)$;\label{lpca-2}
		\item $(I_{A} \otimes \lambda)\lambda(a) = [(\Delta \otimes I_{A})\lambda(a)] (1_H \otimes \lambda(1_{A}))$.\label{lpca-3}
	\end{enumerate}
	
	\nd for all  $a,b \in A$. The coaction is said to be \emph{symmetric} if  the following additional condition is also satisfied:
	\begin{enumerate}\Not[3]{LPCA}
		\item $(I_{A} \otimes \lambda)\lambda(a) = (1_H \otimes \lambda(1_{A})) [(\Delta \otimes I_{A})\lambda(a)]$.\label{lpca-4}
	\end{enumerate}
	
	\nd Also here, if there is no risk of confusion we write simply $A$ instead of $(\lambda, A)$ to denote a left partial $H$-comodule algebra.
The map $\lambda$ is called a \emph{left partial coaction of $H$ on $A$}.
\end{dfn}

\vu

\begin{obs} (Sweedler's notation with summation understood)
	If $A$ is a right partial $H$-comodule algebra, we will use the notation $\rho(a) = a\Zp \otimes a\Up$. Analogously, if $A$ is a left partial $H$-comodule
algebra, we will use the notation $\lambda(u) = u\Zpm \otimes u\Upm$. Moreover, recall that $\Delta(h) = h_1 \otimes h_2$ for all $h\in H$.
\end{obs}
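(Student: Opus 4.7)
The statement is purely a notational convention rather than a mathematical claim, so there is nothing to prove in the usual sense. My ``proposal'' is only to confirm that the notation is well-defined and to record the bookkeeping it encodes. Since $\rho\colon A \to A \otimes H$ and $\lambda\colon A \to H \otimes A$ are linear maps into tensor products over $\Bbbk$, each $\rho(a)$ and $\lambda(u)$ is by definition a finite sum of simple tensors; abbreviating $\rho(a) = a\Zp \otimes a\Up$ and $\lambda(u) = u\Zpm \otimes u\Upm$ with summation understood is therefore legitimate in exactly the same sense as the Sweedler notation $\Delta(h) = h_1 \otimes h_2$ already in use.

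The only genuine content worth highlighting is the role of the decorations. The sign superscript ``$+$'' marks right coaction components while ``$-$'' marks left coaction components; the numerical labels $0$ and $1$ indicate the tensor factor ($0$ for the $A$-slot, $1$ for the $H$-slot); and the bar over these labels (as in $\bar 0,\bar 1$) signals that the coaction in question is partial rather than global. Indeed, in Definition \ref{rca} the global right coaction was written without a bar as $\rho(a) = a\Z \otimes a\U$, whereas here its partial counterpart is $\rho(a) = a\Zp \otimes a\Up$. This typographic separation of the partial and global regimes becomes useful in the next section, where globalizations of partial coactions make both conventions appear side by side in the same computation; for left coactions, the notation $u\Zpm \otimes u\Upm$ additionally keeps the $A$-slot in the right-hand position, so that the position of the factor visually matches the side on which the comodule structure lives.

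Because the remark is not a theorem there is no obstacle to overcome and no step requires verification beyond observing that the symbols do not collide with conventions already adopted. The final clause of the remark addresses exactly this point by recalling that $\Delta(h) = h_1 \otimes h_2$ will continue to be used alongside the new superscripts throughout the paper, which guarantees the compatibility of all Sweedler-type abbreviations appearing in subsequent arguments.
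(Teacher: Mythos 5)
You are right on the essential point: this remark is a notational convention, the paper supplies no proof of it, and none is needed. Your confirmation that the abbreviations are well defined (finite sums of simple tensors, summation suppressed) matches the paper's implicit treatment exactly.

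One detail of your gloss is internally inconsistent, though, and it matters because this convention governs every later computation. You first state (correctly) that the label $0$ marks the $A$-component and $1$ the $H$-component, but you then assert that in $\lambda(u) = u^{-\bar 0} \otimes u^{-\bar 1}$ the right-hand factor occupies the $A$-slot of $H \otimes A$, which would make $\bar 1$ the $A$-label; both claims cannot hold at once. The paper's working convention, as fixed by its later usage, resolves this: in the $(L,R)$-smash product $(a \natural u)(b \natural v) = (a \Apd v^{+\bar 1})(u^{-\bar 1} \Ape b) \natural u^{-\bar 0} v^{+\bar 0}$ the expression $u^{-\bar 1} \Ape b$ forces $u^{-\bar 1} \in H$ with $u^{-\bar 0}$ landing in the $\bar A$-slot, and in Proposition~\ref{Hbcma=>H0bma} one reads $\lambda(a) = a^{-1} \otimes a^{-0} \in H \otimes A$. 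So $1$ always labels the $H$-component, $0$ always labels the $A$-component, and for left coactions the $H$-factor is written first; the displayed order in the remark itself is best read as a misprint for $\lambda(u) = u^{-\bar 1} \otimes u^{-\bar 0}$. Keep your first reading of the labels and drop the sentence about the $A$-slot being the right-hand position, otherwise the sign and slot conventions you carry into the globalization and smash-product sections will be off by a transposition.
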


\vu

It is straightforward to check that a right partial $H$-comodule algebra $A$ is global if and only if
$\rho(1_A)=1_A\otimes 1_H$. The same assertion also applies to the left case.

\vd

\begin{ex}
	(see \cite[section 6]{Lomp}) If $H$ is finite dimensional and $\Ape:H\otimes A \to A$ is a left partial action of $H$ on a unital algebra $A$, then $\rho:A \to A\otimes H^\ast $
given by $$\rho(a)=\sum_{1\leq i\leq n}(h_i\Ape a)\otimes p_i$$ is a right partial coaction of $H^\ast$ on $A$, where $\{h_i, p_i|1\leq i\leq n\}$ is a dual basis for $H$.
\end{ex}

\vu

This example was considered later in the more general context of Hopf algebras dually paired with a non-degenerated
pairing (see \cite[Theorem 4]{AB}).

\vu

Analogously to the case of action, we can define an induced partial coaction from a global one. We will consider right partial Hopf coactions,
for the left ones one proceeds in a similar way. This construction is described in the next example as follow.

\vu

\begin{ex}
	Let $B$ be a right $H$-comodule algebra via the coaction $\rho:B\to B\otimes H$. If $A$ is a right ideal of B with identity element $1_A$, then the map
	$$\bar{\rho}(a) = (1_A\otimes 1_H)\rho(a),\,\ \text{for all}\,\, a\in A,$$
	defines an structure of right partial $H$-comodule algebra on $A$ \cite[Proposition 8]{AB}. Conversely, any partial coaction  of $H$ on a unital algebra $A$ is of this type \cite[Theorem 4]{AB2}.
\end{ex}

\vu

Analogously to the action case, now we can define a partial bicomodule algebra. This is a left and right partial comodule algebra that obey a certain compatibility rule.

\vu

\begin{dfn}
	A unital algebra $A$ is said to be a \emph{partial $H$-bicomodule algebra} if there exist linear maps $\rho:A\to A\otimes H$ and $\lambda:A\to H\otimes A$
	such that $(A,\rho)$ is a right partial $H$-comodule algebra, $(\lambda, A)$ is a left partial $H$-comodule algebra and the following
	relation of compatibility is satisfied: $$(I_H \otimes \rho) \lambda = (\lambda \otimes I_H) \rho.$$

We will denote such a partial $H$-bicomodule algebra $A$ by the triple $(\lambda, A, \rho)$ or simply by $A$ if there is no risk of confusion. Notice that any left (resp., right) partial $H$-comodule algebra $A$ can be seen as partial $H$-bicomodule algebra. Indeed, it suffices to take one of the coactions as
being the trivial coaction given by the linear map induced by $a\mapsto 1_H\otimes a$ (resp., $a\mapsto a\otimes 1_H$), for all $a\in A$.

\end{dfn}

\vu

\begin{ex}\label{bi-co-induzida}
	Let $B$ be an $H$-bicomodule algebra and $A$ a unital subalgebra of $B$ ($1_A$ not necessary equal to $1_B$) such that the following equality holds
	\begin{equation}
		(\lambda(a) \otimes 1_H)(1_H \otimes \rho(b)) = (\lambda(a) \otimes 1_H)(1_H \otimes 1_A \otimes 1_H)(1_H \otimes \rho(b))\label{cond-bi-co-induzida}
	\end{equation}
	and both of the sides lie in $H \otimes A \otimes H$, for all  $a,b \in A$.

\vu	
	
	Then, $A$ becomes a partial $H$-bicomodule algebra via the coactions given by
	$$
	\begin{array}{rclcrcl}
	\bar{\rho}: A & \to & A \otimes H & \ \ \ \mathrm{and} \ \ \ & \bar{\lambda}: A & \to & H \otimes A \\
	a & \mapsto & (1_A \otimes 1_H) \rho(a)  & & a & \mapsto & \lambda(a)(1_H \otimes 1_A)
	\end{array}
	$$
	
	Clearly, applying $\Ve_H \otimes I_A \otimes I_H$ and $I_H \otimes I_A \otimes \Ve_H$ in  (\ref{cond-bi-co-induzida}), one sees that the maps $\bar\rho$ and
$\bar\lambda$ are well defined and $A$ is a left and right partial $H$-comodule algebra.
	
	It remains to show that the partial coactions are compatibles. Let $a \in A$. Then,
$$
	\begin{array}{rcl}
		(I \otimes \rhop) \ldap(a)
		& = & a\Um \otimes 1_A (a\Zm 1_A)\Z \otimes (a\Zm 1_A)\U \\
		& = & a\Um \otimes 1_A (a\Zm 1_A)\Z 1_A \otimes (a\Zm 1_A)\U \\
		& = & a\Um \otimes 1_A (a\Zm 1_A)\Z 1_A\Z \otimes (a\Zm 1_A)\U\Ve(1_A \U) \\
		& = & a\Um \otimes 1_A (a\Zm 1_A)\Z 1_A\Z \otimes (a\Zm 1_A)\U 1_A\U_1 S(1_A\U_2) \\
		& = & a\Um \otimes 1_A (a\Zm 1_A)\Z 1_A\Z\Z \otimes (a\Zm 1_A)\U 1_A\Z\U S(1_A\U) \\
		& = & a\Um \otimes 1_A (a\Zm 1_A 1_A\Z)\Z  \otimes (a\Zm 1_A 1_A\Z)\U S(1_A\U) \\
		& = & a\Um \otimes 1_A (a\Zm 1_A\Z)\Z  \otimes (a\Zm 1_A\Z)\U S(1_A\U) \\
		& = & a\Um \otimes 1_A a\Zm\Z 1_A\Z\Z \otimes a\Zm\U 1_A\Z\U S(1_A\U) \\
		& = & a\Um \otimes 1_A a\Zm\Z 1_A\Z \otimes a\Zm\U 1_A\U_1 S(1_A\U_2) \\
		& = & a\Um \otimes 1_A a\Zm\Z 1_A\Z \otimes a\Zm\U \Ve(1_A\U) \\
		& = & a\Um \otimes 1_A a\Zm\Z 1_A \otimes a\Zm\U
	\end{array}
$$
and a similar computation also shows that
$$(\ldap \otimes I)\rhop(a) = a\Z \Um \otimes 1_A a\Z\Zm 1_A \otimes a\Um.$$

Now, the result follows from the compatibility of $\rho$ and $\lambda$.

\vu

We will see in Subsection 3.2 that every partial $H$-bicomodule algebra is of the type described above.
\end{ex}

\vd

%%%%%%%%%%%%%%%%%%%%%%%%%%%%%%%%%%%%%%%%%%%%%%%%%%%%%%%%%%%%%%%%%%%%%%%%%%%%%%%%%%%%%%%%%%%%
%%%%%%%%%%%%%%%%%%%%%%%%%%%%%%%%%%%%%%%%%%%%%%%%%%%%%%%%%%%%%%%%%%%%%%%%%%%%%%%%%%%%%%%%%%%%
%%%%%%%%%%%%%%%%%%%%%%%%%%%%%%%%%%%%%%%%%%%%%%%%%%%%%%%%%%%%%%%%%%%%%%%%%%%%%%%%%%%%%%%%%%%%
%%%%%%%%%%%%%%%%%%%%%%%%%%%%%%%%%%%%%%%%%%%%%%%%%%%%%%%%%%%%%%%%%%%%%%%%%%%%%%%%%%%%%%%%%%%%
%%%%%%%%%%%%%%%%%%%%%%%%%%%%%%%%%%%%%%%%%%%%%%%%%%%%%%%%%%%%%%%%%%%%%%%%%%%%%%%%%%%%%%%%%%%%
\section{Globalization for partial $H$-bi(co)module algebras}
%%%%%%%%%%%%%%%%%%%%%%%%%%%%%%%%%%%%%%%%%%%%%%%%%%%%%%%%%%%%%%%%%%%%%%%%%%%%%%%%%%%%%%%%%%%%

%%%%%%%%%%%%%%%%%%%%%%%%%%%%%%%%%%%%%%%%%%%%%%%%%%%%%%%%%%%%%%%%%%%%%%%%%%%%%%%%%%%%%%%%%%%%
%%%%%%%%%%%%%%%%%%%%%%%%%%%%%%%%%%%%%%%%%%%%%%%%%%%%%%%%%%%%%%%%%%%%%%%%%%%%%%%%%%%%%%%%%%%%
\subsection{Globalization for partial $H$-bimodule algebras}${}$
%%%%%%%%%%%%%%%%%%%%%%%%%%%%%%%%%%%%%%%%%%%%%%%%%%%%%%%%%%%%%%%%%%%%%%%%%%%%%%%%%%%%%%%%%%%%

\vd

Globalizations were first considered in \cite{abadie} for partial group actions on topological spaces, and later
in \cite{DE} for partial group actions on algebras and in \cite{AB} for partial Hopf (co)actions on algebras.

\vu

In this subsection we will introduce the notion of globalization of a partial Hopf bimodule structure, as well as,  we will ensure
its existence by constructing one, called standard, satisfying a certain condition of minimality. We will also prove that globalizations
satisfying this same condition of minimality are all isomorphic.

\vu

\begin{dfn}\label{biglobalizacao}
	Let $(\Ape, A, \Apd)$ be a unital partial $H$-bimodule algebra. A \emph{globalization} of $A$ is a pair $((\triangleright, B, \triangleleft),  \theta)$, where
$(\triangleright, B, \triangleleft)$ is an $H$-bimodule algebra
(not necessarily unital) and  $\theta:A\to B$ is an algebra monomorphism, such that the following conditions hold:
	\begin{enumerate}
		\item $(\theta(a)\triangleleft h)(g\triangleright\theta(b)) = \theta[(a\Apd h)(g\Ape b)]$,\label{glob-vesgo}\,\, $\text{for all}\,\ h,g\in H, a,b\in A$,
		\item $\theta$ is admissible, i.e., $B=H\triangleright\theta(A)\triangleleft H$.\label{glob-HAH}
	\end{enumerate}
\end{dfn}

\vd

The following proposition illustrates that, in fact, the above definition generalizes the notion of globalization of unital left (or, similarly, right) partial module algebras
in the sense of \cite{AB}.

\vu

\begin{prop}\label{motivacao}
	Let $(\Ape,A)$ be a unital left partial $H$-module algebra. Let $(\triangleright,B)$ be a left $H$-module algebra
such that there exists an algebra monomorphism $\theta:A \to B$  and $B = H \triangleright \theta(A)$. Consider on $A$ and $B$ the right trivial action given by $\varepsilon$.
Then the following statements are equivalent:
	\begin{itemize}
		\item[(1)] $(B, \theta)$ is a globalization of $A$.

\vu

		\item[(2)] $\theta(h \Ape a) = \theta(1_A)(h \triangleright \theta(a))$ for all $h \in H$ $a \in A$.

\vu

		\item[(3)] $(\theta(a) \triangleleft h)(k \triangleright \theta(b)) = \theta((a \Apd h)(k \Ape b))$ for all $h,k \in H$ $a,b \in A$.\label{motivacao-3}
	\end{itemize}
\end{prop}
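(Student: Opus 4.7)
The plan is to observe that, because the right action on both $A$ and $B$ is the trivial one induced by $\varepsilon$, conditions (1) and (3) each reduce to the single identity
\[
\theta(a)\bigl(h\triangleright\theta(b)\bigr) \;=\; \theta\bigl(a\,(h\Ape b)\bigr),\qquad a,b\in A,\ h\in H,
\]
which I will call ($\ast$), and then to link ($\ast$) to (2) by two natural specialisations: set $a=1_A$ in one direction, and left-multiply by $\theta(a)$ in the other.

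First I would unwind (1). Since $B=H\triangleright\theta(A)$ is part of the hypotheses, admissibility in part (\ref{glob-HAH}) of Definition \ref{biglobalizacao} is automatic once the right action is trivial. Hence (1) reduces to condition (\ref{glob-vesgo}) of the same definition, which after substituting $\theta(a)\triangleleft h=\varepsilon(h)\theta(a)$ and $a\Apd h=\varepsilon(h)a$ becomes
\[
\varepsilon(h)\,\theta(a)\bigl(g\triangleright\theta(b)\bigr)=\varepsilon(h)\,\theta\bigl(a(g\Ape b)\bigr);
\]
taking $h=1_H$ yields ($\ast$), and the converse is clear. The same manipulation applied to (3) also yields ($\ast$), so (1)$\,\Leftrightarrow\,$(3) is immediate.

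It then remains to prove (2)$\,\Leftrightarrow\,$($\ast$). For ($\ast$)$\,\Rightarrow\,$(2), set $a=1_A$ in ($\ast$) and use that $\theta$ is multiplicative together with $1_A\,x=x$ in $A$. For (2)$\,\Rightarrow\,$($\ast$), left-multiply (2) by $\theta(a)$ and apply multiplicativity of $\theta$ to rewrite
\[
\theta(a)\,\theta(1_A)\bigl(h\triangleright\theta(b)\bigr)=\theta(a)\bigl(h\triangleright\theta(b)\bigr)\quad\text{and}\quad \theta(a)\,\theta(h\Ape b)=\theta\bigl(a(h\Ape b)\bigr).
\]
There is essentially no obstacle here: the whole content of the proposition is that, when the right action is trivial, the bimodule notion of globalization specialises to the classical left-module one from \cite{AB}, with the element $\theta(1_A)$ serving as the bridge between the pointwise identity (2) and the bilinear identity (3).
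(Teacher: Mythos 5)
Your proof is correct, and its substantive core --- the equivalence of (2) and (3), obtained by specializing to $a=1_A$ and $h=1_H$ in one direction and inserting $\theta(a)=\theta(a)\theta(1_A)$ before applying (2) in the other --- is exactly the computation the paper performs. The only divergence is in how (1) is discharged: the paper reads (1) as ``globalization of $(\Ape,A)$ as a left partial $H$-module algebra'' in the sense of \cite{AB} and simply cites that reference for (1)$\Leftrightarrow$(2), whereas you read (1) via Definition \ref{biglobalizacao} with the trivial right action, under which (1) becomes literally condition (3) together with an admissibility requirement that is automatic from $B=H\triangleright\theta(A)$. Your reduction is valid and self-contained, but note that it collapses (1) and (3) into the same statement, so on your reading the proposition carries no content beyond (2)$\Leftrightarrow$(3); the paper's intended reading (confirmed by the proof of Corollary \ref{cor1}, which uses the proposition to pass between the two notions of globalization) is that (1) refers to the pre-existing left-module notion, so that the proposition genuinely identifies that notion with the new bimodule one. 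Either way, no step is missing and nothing fails.
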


\begin{proof}
	Clearly, (1) and (2) are equivalent (see \cite{AB}).

\vu
	
	We show now that (2) and (3) also are equivalent. Indeed, supposing $(2)$,
	$$
	\begin{array}{rcl}
	(\theta(a) \triangleleft h)(k \triangleright \theta(b))
	& = & \varepsilon(h) \theta(a)(k \triangleright \theta(b)) \\
	& = & \varepsilon(h) \theta(a)\theta(1_A)(k \triangleright \theta(b)) \\
	& = & \varepsilon(h) \theta(a)\theta(k \Ape b) \\
	& = & \theta(\varepsilon(h)a(k \Ape b))\\
	& = & \theta((a \Apd h)(k \Ape b)).
	\end{array}
	$$
	
	Conversely, supposing $(3)$,
	$$
	\begin{array}{rcl}
	\theta(1_A)(h \triangleright \theta(a)) & = & (\theta(1_A) \triangleleft 1_H )(h \triangleright \theta(a)) \\
	& = & \theta((1_A \Apd 1_H)(h \Ape a)) \\
	& = & \theta(1_A(h \Ape a))\\
	& = & \theta(h \Ape a).
	\end{array}
	$$
\end{proof}

\vu

\begin{coro}\label{cor1}
Let $(\Ape, A)$, $(\triangleright,B)$ and $\theta:A \to B$ be as in Proposition \ref{motivacao}. Consider on $A$ and $B$ the right trivial action given by $\varepsilon$.
Then, $(B, \theta)$ is a globalization of $A$ as a left partial $H$- module algebra (in the sense of \cite{AB}) if and only if $(B, \theta)$ is a globalization of $A$,
as a partial $H$-bimodule algebra.
\end{coro}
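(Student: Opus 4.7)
The plan is to derive the corollary essentially as a direct restatement of the equivalence (1)$\Leftrightarrow$(2) established in Proposition \ref{motivacao}, once one verifies that the ambient data (algebra monomorphism, admissibility, bimodule algebra structure) matches in both notions of globalization when the right $H$-actions on $A$ and $B$ are taken to be the trivial ones induced by $\varepsilon$.

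First I would confirm that with $b\triangleleft h := \varepsilon(h)\, b$ on $B$ and $a\Apd h := \varepsilon(h)\, a$ on $A$, the triple $(\triangleright, B, \triangleleft)$ is an $H$-bimodule algebra and $(\Ape, A, \Apd)$ is a partial $H$-bimodule algebra: the right module axioms and the multiplicativity are immediate from properties of $\varepsilon$; the compatibility $h\triangleright(b\triangleleft g)=(h\triangleright b)\triangleleft g$ reduces to $\varepsilon(g)(h\triangleright b)=\varepsilon(g)(h\triangleright b)$; and $h\Ape(a\Apd g)=(h\Ape a)\Apd g$ reduces similarly. Thus both sides of the corollary are correctly set up.

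Next I would check that the admissibility conditions coincide. In the bimodule sense, admissibility is $B=H\triangleright\theta(A)\triangleleft H$; but under the trivial right action one has $\theta(a)\triangleleft h=\varepsilon(h)\theta(a)\in\theta(A)$, so $\theta(A)\triangleleft H=\theta(A)$ (the reverse inclusion being given by $h=1_H$), and hence the bimodule admissibility collapses to $B=H\triangleright\theta(A)$, which is precisely the admissibility condition from \cite{AB} already assumed in Proposition \ref{motivacao}.

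Finally, the defining property of a bimodule globalization, namely
\[
(\theta(a)\triangleleft h)(g\triangleright\theta(b))=\theta\bigl((a\Apd h)(g\Ape b)\bigr),
\]
is precisely condition (3) of Proposition \ref{motivacao}, whereas the property defining a left partial $H$-module algebra globalization in the sense of \cite{AB} is condition (2), $\theta(h\Ape a)=\theta(1_A)(h\triangleright\theta(a))$. Since Proposition \ref{motivacao} already gives (2)$\Leftrightarrow$(3), the corollary follows immediately. No serious obstacle is expected: the whole content is the observation that, under the trivial right $\varepsilon$-action, the bimodule globalization data reduces cleanly to the left module globalization data, so the equivalence is inherited from the proposition.
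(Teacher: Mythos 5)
Your proof is correct and follows essentially the same route as the paper: both reduce the corollary to Proposition \ref{motivacao} after observing that under the trivial right action the admissibility conditions coincide, $H\triangleright\theta(A)\triangleleft H=H\triangleright\theta(A)\varepsilon(H)=H\triangleright\theta(A)$. The only (immaterial) difference is that for the converse direction the paper invokes Lemma \ref{lemaco}(ii) to pass through condition (2) of the proposition, whereas you use the equivalence (1)$\Leftrightarrow$(3) directly, which works just as well since condition (1) of Definition \ref{biglobalizacao} is literally condition (3) of Proposition \ref{motivacao}.
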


\vu

Before to prove this corollary we need some additional details about globalizations of partial bimodule algebras, which will be also useful in the sequel.

\vu

\begin{lema}\label{lemaco}
Let $(\Ape, A, \Apd)$ be a unital partial $H$-bimodule algebra and $((\triangleright, B, \triangleleft), \theta)$ a globalization of $A$. Then,
	\begin{itemize}
		\item[(i)] $(h\triangleright\theta(a)\triangleleft k)(h'\triangleright\theta(b)\triangleleft k')=
                   h_1\triangleright\theta[(a\Apd kS(k'_1))(S(h_2)h'\Ape b)]\triangleleft k'_2$,\label{lemaco-1}

\vu

		\item[(ii)] $\theta(1_A)(h\triangleright\theta(a))=\theta(h\Ape a)$,\label{lemaco-2}

\vu

		\item[(iii)]  $(\theta(a)\triangleleft h)\theta(1_A)=\theta(a\Apd h)$,\label{lemaco-3}

\vu

		\item[(iv)]  $\theta(1_A)(h\triangleright\theta(a)\triangleleft k)\theta(1_A)=\theta(h\Ape a\Apd k)$,\label{lemaco-4}
	\end{itemize} for all $a,b\in A$ and $h,h',k,k'\in H$.
\end{lema}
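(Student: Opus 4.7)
My overall plan is to dispatch (ii) and (iii) as immediate specializations of the defining identity of a globalization, then prove the more technical statement (i) by a direct computation in the $H$-bimodule algebra $B$, and finally derive (iv) from (i) and (iii). For (ii), I would set $a=1_A$ and $h=1_H$ in the globalization axiom $(\theta(a)\triangleleft h)(g\triangleright\theta(b))=\theta[(a\Apd h)(g\Ape b)]$ of Definition \ref{biglobalizacao}; using $1_A\Apd 1_H=1_A$ and the fact that $1_A$ is the unit of $A$, this collapses to $\theta(1_A)(g\triangleright\theta(b))=\theta(g\Ape b)$, which is (ii) after renaming. For (iii), the dual substitution $b=1_A,\,g=1_H$ together with $1_H\Ape 1_A=1_A$ yields it analogously.

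For (i) I would work entirely inside the $H$-bimodule algebra $B$, relying on two auxiliary identities valid in any such algebra:
\[
(x\triangleleft k)(y\triangleleft k')=\bigl[(x\triangleleft k\,S(k'_1))\,y\bigr]\triangleleft k'_2
\qquad\text{and}\qquad
(h\triangleright x)(h'\triangleright y)=h_1\triangleright\bigl[x\,(S(h_2)h'\triangleright y)\bigr].
\]
Both come from the module-algebra axioms $h\triangleright(uv)=(h_1\triangleright u)(h_2\triangleright v)$ and $(uv)\triangleleft k=(u\triangleleft k_1)(v\triangleleft k_2)$, combined with coassociativity and the antipode identity $S(k'_{11})k'_{12}=\varepsilon(k'_1)1_H$. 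Applying the right-version identity to the LHS of (i) with $x=h\triangleright\theta(a)$ and $y=h'\triangleright\theta(b)$ peels off the outer $k'_2$; applying the left-version identity with $x=\theta(a)\triangleleft kS(k'_1)$ and $y=\theta(b)$ peels off $h_1$ on the left, leaving
\[
h_1\triangleright\bigl[(\theta(a)\triangleleft kS(k'_1))(S(h_2)h'\triangleright\theta(b))\bigr]\triangleleft k'_2.
\]
A single invocation of the globalization axiom rewrites the bracketed product as $\theta[(a\Apd kS(k'_1))(S(h_2)h'\Ape b)]$, which is precisely the right-hand side of (i).

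For (iv) I would specialize (i) by taking $h:=1_H$, $a:=1_A$, $k:=1_H$ (and renaming $h'\to h,\, k'\to k,\, b\to a$) to obtain $\theta(1_A)(h\triangleright\theta(a)\triangleleft k)=\theta[(1_A\Apd S(k_1))(h\Ape a)]\triangleleft k_2$. Multiplying both sides on the right by $\theta(1_A)$ and invoking (iii) with argument $(1_A\Apd S(k_1))(h\Ape a)$ in place of $a$ and $k_2$ in place of $h$ gives $\theta\bigl([(1_A\Apd S(k_1))(h\Ape a)]\Apd k_2\bigr)$. Axiom \ref{rpma-2} rewrites the inside as $(1_A\Apd S(k_1)k_{21})((h\Ape a)\Apd k_{22})$; coassociativity and the antipode axiom collapse $\sum S(k_1)k_{21}\otimes k_{22}=\sum S(k_{11})k_{12}\otimes k_2=\sum\varepsilon(k_1)1_H\otimes k_2=1_H\otimes k$, reducing the whole expression to $(1_A\Apd 1_H)((h\Ape a)\Apd k)=(h\Ape a)\Apd k=h\Ape a\Apd k$, the last equality being the bimodule compatibility.

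The main obstacle I anticipate is the bookkeeping in (i): the two auxiliary bimodule identities require careful Sweedler-index juggling and correct use of the antipode/counit axioms, and one must choose the order in which the outer actions are stripped (right $k'$ first, then left $h$) so that the surviving middle factor is a product of two elements in the image of $\theta$, which is precisely where the globalization axiom applies. Once (i) is secured, (iv) reduces to a specialization plus a short combinatorial simplification via the partial action axioms, and (ii)--(iii) are straightforward unit substitutions in the defining axiom.
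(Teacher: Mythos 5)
Your proposal is correct and follows essentially the same route as the paper: the same two auxiliary $H$-bimodule-algebra identities (peeling off the outer left and right actions via the antipode) reduce (i) to the globalization axiom, and (ii)--(iv) are obtained by specializing at units and collapsing Sweedler indices with the counit. The only differences are cosmetic --- you strip the right action before the left in (i), read (ii)--(iii) directly off the axiom rather than off (i), and derive (iv) by the mirror-image computation (absorbing $\theta(1_A)$ on the left via (i) and on the right via (iii), instead of the paper's right-then-left order using (ii)).
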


\begin{proof} The items (ii)-(iv) easily follows from the first item. Indeed, for (ii) put $h = k = k' = 1_H$ and $a = 1_A$ in (i),
for (iii) put $h = h' = k' = 1_H$ and $b = 1_A$ in (i), and for (iv),

$$
\begin{array}{rcl}
\theta(1_A)[h\triangleright\theta(a)\triangleleft k]\theta(1_A)
& = & \theta(1_A)[h\triangleright\theta(a)\triangleleft k][1_H \triangleright \theta(1_A) \triangleleft 1_H]\\
& \overset{(i)}{=} & \theta(1_A)(h_1\triangleright \theta((a \Apd k)(S(h_2) \Ape 1_A))\triangleleft 1_H)\\
& = & \theta(1_A)(h_1\triangleright \theta((a \Apd k) (S(h_2) \Ape 1_A)))\\
& \overset{(ii)}{=} & \theta(h_1 \Ape [(a \Apd k) (S(h_2) \Ape 1_A)])\\
& = & \theta((h_1 \Ape (a \Apd k)) (h_2 S(h_3) \Ape 1_A))\\
& = & \theta((h_1 \Ape (a \Apd k)) (\Ve(h_2)1_H \Ape 1_A))\\
& = & \theta((h_1\Ve(h_2) \Ape (a \Apd k)) (1_H \Ape 1_A))\\
& = & \theta((h \Ape a \Apd k) 1_A)\\
& = & \theta(h\Ape a\Apd k).
\end{array}
$$

\vu

Hence, it remains to prove (i). For this, we start pointing out the following two equations, which
are straightforward and will be useful in what follows,
\begin{equation}
y(x\Apd h) = [(y\Apd S(h_1))x]\Apd h_2
\end{equation} and
\begin{equation}
(h\Ape x)y = h_1\Ape [x(S(h_2)\Ape y)].
\end{equation}

\vu

Now, for all  $a, b\in A$, $h,h',k,k'\in H$ we have
	$$
	\begin{array}{rcl}
	(h\triangleright\theta(a)\triangleleft h')(k\triangleright\theta(b)\triangleleft k')
	&\overset{(4)}{=} & h_1\triangleright[(\theta(a)\triangleleft h')  (S(h_2)\triangleright(k\triangleright\theta(b)\triangleleft k'))]\\
	& = & h_1\triangleright[(\theta(a)\triangleleft h')(S(h_2)k\triangleright\theta(b)\triangleleft k')]\\
	&\overset{(3)}{=} & h_1\triangleright[(\theta(a)\triangleleft h' S(k'_1))(S(h_2)k\triangleright\theta(b))]\triangleleft k'_2\\
	& \Refie{biglobalizacao}{glob-vesgo} & h_1\triangleright\theta[(a\Apd h' S(k'_1))(S(h_2)k\Ape\theta(b))]\triangleleft k'_2.
	\end{array}
	$$
\end{proof}

\vu

\begin{proof} (of Corollary \ref{cor1})

\vu

First of all, notice that $H \triangleright \theta(A) \triangleleft H = H \triangleright \theta(A)\Ve(H)=H \triangleright \theta(A)$.

\vu

Assuming that $(B,\theta)$ is a globalization of $A$ as a left partial $H$-module algebra. it follows from Proposição \ref{motivacao} that
$(\theta(a) \triangleleft h)(k \triangleright \theta(b)) = \theta((a \Apd h)(k \Ape b))$. Hence, $(B,\theta)$ is a globalização of $A$ as a partial
$H$-bimodule algebra.

\vu

Conversely, if $(B,\theta)$ is a globalization of $A$ as a partial $H$-bimodule algebra, then $\theta(1_A)(h\triangleright\theta(a))=\theta(h\Ape a)$
by Lema \ref{lemaco} and, therefore, $(B,\theta)$ is a globalização of $A$ as a left partial $H$-module algebra by Proposição \ref{motivacao}.
\end{proof}

\vd

In the sequel we will proceed with the construction of a globalization for a unital partial $H$-bimodule algebra.

\vd

Let $(\Ape, A, \Apd)$ be a unital partial $H$-bimodule algebra and consider the convolutive algebra $\H = Hom_{\Bbbk}(H\otimes H, A)$.
It is immediate to check that the map
	$$
	\begin{array}{rcl}
	\varphi:A & \to & \mathcal{H}\\
	a & \mapsto & \varphi(a): k\otimes k'\mapsto k\Ape a\Apd k'
	\end{array}
	$$
is an algebra monomorphism. As well, it is straightforward that $\H$ has an structure of
$H$-bimodule given by $$(h\triangleright f)(k\otimes k') = f(kh\otimes k')$$
and $$(f\triangleleft h)(k\otimes k') = f(k\otimes hk').$$

\vd

The map $\varphi$ as above defined satisfies the condition $(\ref{glob-vesgo})$ of the definition of glo\-ba\-li\-za\-ti\-on, that is,
$$\varphi((b\Apd h')(h\Ape a)) = (\varphi(b)\triangleleft h')\ast(h\triangleright\varphi(a)).$$
Indeed,  for all $h,h',k,k'\in H$ and $a,b\in A$,
	
	$$
	\begin{array}{rcl}
	\varphi((b\Apd h')(h\Ape a))(k\otimes k') & = & k\Ape (b\Apd h')(h\Ape a)\Apd k'\\
	& = & (k_1\Ape b\Apd h'\Apd k'_1)(k_2\Ape h\Ape a\Apd k'_2)\\
	& = & (k_1\Ape b\Apd h'\Apd k'_1)(k_2\Ape 1_A)(k_3 h\Ape a\Apd k'_2)\\
	& = & (k_1\Ape b\Apd h'\Apd k'_1)(k_2 h\Ape a\Apd k'_2)\\
	& = & (k_1\Ape b\Apd h' k'_1)(1_A\Apd k'_2)(k_2\Ape h\Ape a\Apd k'_3)\\
	& = & (k_1\Ape b\Apd h' k'_1)(k_2 h\Ape a\Apd k'_2)\\
	& = &[\varphi(b)(k_1\otimes h'k'_1)][\varphi(a)(k_2h\otimes k'_2)]\\
	& = &[(\varphi(b)\triangleleft h')(k_1\otimes k'_1)][(h\triangleright\varphi(a))(k_2\otimes k'_2)]\\
	& = &[(\varphi(b)\triangleleft h')\ast(h\triangleright\varphi(a))](k\otimes k').
	\end{array}
	$$

\vd

Finally, it follows from Lemma \ref{lemaco} that $B = H\triangleright\varphi(A)\triangleleft H$ is an algebra. Therefore, $B$  is a globalization of $A$,
and the proof of the following theorem is accomplished.

\vu

\begin{teo}
	Let $(\Ape, A, \Apd)$ be a unital partial $H$-bimodule algebra. Then, the pair $(B, \varphi)$ is a globalization of $A$. It is called the standard globalization of $A$.\qed
\end{teo}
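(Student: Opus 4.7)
The plan is to verify the two defining conditions of a globalization in Definition \ref{biglobalizacao} for the pair $(B,\varphi)$, assembling the pieces already established in the paragraphs preceding the statement. The ingredients are: $\varphi$ is an algebra monomorphism (because $\varphi(a)(1_H\otimes 1_H)=1_H\Ape a\Apd 1_H=a$, showing injectivity, while multiplicativity follows from $h\Ape(a\Apd g)=(h\Ape a)\Apd g$ together with axioms \ref{lpma-2} and \ref{rpma-2}); the space $\H=\mathrm{Hom}_\Bbbk(H\otimes H,A)$ is an $H$-bimodule algebra under the stated $\triangleright$ and $\triangleleft$ (a routine check since the actions only reparametrize the arguments, and compatibility with convolution comes from the coassociativity of $\Delta$); and $\varphi$ satisfies the twisted intertwining identity $\varphi((b\Apd h')(h\Ape a))=(\varphi(b)\triangleleft h')\ast(h\triangleright\varphi(a))$, which is exactly condition \ref{glob-vesgo} and has been displayed in the excerpt above.

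Having condition \ref{glob-vesgo} in hand, condition \ref{glob-HAH} (admissibility) is automatic, since we have \emph{defined} $B:=H\triangleright\varphi(A)\triangleleft H$. The only substantive point that still requires attention is that $B$ really is a (non-unital) subalgebra of $\H$, i.e.\ closed under convolution; once this is verified, one can legitimately call $(B,\varphi)$ a globalization in the sense of Definition \ref{biglobalizacao}.

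For this closure I would invoke Lemma \ref{lemaco}(i), observing that its proof uses only condition \ref{glob-vesgo} (applied twice, together with the general identities (3) and (4) stated in that proof) and not admissibility. Therefore the formula applies to $\varphi$ and gives
\[
(h\triangleright\varphi(a)\triangleleft k)\ast(h'\triangleright\varphi(b)\triangleleft k')
= h_1\triangleright\varphi\bigl[(a\Apd kS(k'_1))(S(h_2)h'\Ape b)\bigr]\triangleleft k'_2,
\]
which visibly lies in $H\triangleright\varphi(A)\triangleleft H=B$. Since $B$ is also stable under the bimodule actions $\triangleright$ and $\triangleleft$ by construction, it is an $H$-sub-bimodule algebra of $\H$, concluding the verification.

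The step I expect to be the main subtlety is precisely this closure argument: one must be careful not to create a circular dependence on Lemma \ref{lemaco}, which was originally stated for an already-given globalization. The way out is to note that its proof of item (i) uses \ref{glob-vesgo} alone, so it can be reread as a general algebraic identity valid for any map $\theta:A\to B$ into an $H$-bimodule algebra satisfying that condition. Once this is pointed out, the theorem follows.
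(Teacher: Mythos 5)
Your proposal is correct and follows essentially the same route as the paper: verify that $\varphi$ is an algebra monomorphism, that $\H$ is an $H$-bimodule algebra, that condition \ref{glob-vesgo} holds, and then invoke Lemma \ref{lemaco}(i) to conclude that $B=H\triangleright\varphi(A)\triangleleft H$ is closed under convolution, so that $(B,\varphi)$ is a globalization. Your explicit remark that the proof of Lemma \ref{lemaco}(i) uses only condition \ref{glob-vesgo} (and not admissibility), so that no circularity arises, is a point the paper leaves implicit but is exactly the right justification.
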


\vu

\begin{obs} With respect  to the standard globalization, we observe, in addition, that

\vu

		(1) if the right partial action on $A$ is symmetric,  $\varphi(A)$ is a right ideal of $B$. Indeed,	
		
		$$
		\begin{array}{rcl}
		\varphi(b(h\Ape a\Apd h'))(k\otimes k')
		& = & k\Ape b(h\Ape a\Apd h')\Apd k' \\
		& = & (k_1\Ape b\Apd k'_1)(k_2\Ape h\Ape a\Apd h'\Apd k'_2)\\
		& = & (k_1\Ape b\Apd k'_1)(k_2\Ape 1_A)(k_3 h\Ape a\Apd h'\Apd k'_2)\\
		& = & (k_1\Ape b\Apd k'_1)(k_2 h\Ape a\Apd h'\Apd k'_2)\\
		& \Sym & (k_1\Ape b\Apd k'_1)(1_A\Apd k'_2)(k_2 h\Ape a\Apd h' k'_3)\\
		& = & (k_1\Ape b\Apd k'_1)(k_2 h\Ape a\Apd h' k'_2)\\
		& = & \varphi(b)(k_1\otimes k'_1)[h\triangleright\varphi(a)\triangleleft h')(k_2\otimes k'_2)]\\
		& = & [\varphi(b)\ast(h\triangleright\varphi(a)\triangleleft h')](k\otimes k'),
		\end{array}
		$$ for all $a, b\in A$, $h, h', k, k'\in H$.

\vu

		(2) and, similarly, if the left partial action on $A$ is symmetric, $\varphi(A)$ is a left ideal of $B$.
\end{obs}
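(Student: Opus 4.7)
The plan is to verify directly that $\varphi(A)$ absorbs products from the right by arbitrary generators of $B$, using the symmetric right partial action axiom. Since $B = H \triangleright \varphi(A) \triangleleft H$ is spanned by elements of the form $h \triangleright \varphi(a) \triangleleft h'$, it suffices to prove that $\varphi(b)\ast(h\triangleright\varphi(a)\triangleleft h') \in \varphi(A)$ for all $a,b \in A$ and $h,h' \in H$; specifically I will show it equals $\varphi\bigl(b(h\Ape a\Apd h')\bigr)$.

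First I would evaluate both sides of the claimed identity at an arbitrary $k\otimes k' \in H\otimes H$. On the left, using the convolution on $\H = \mathrm{Hom}_\Bbbk(H\otimes H, A)$ together with the definitions of $\triangleright$ and $\triangleleft$ on $\H$, the expression becomes
\[
(k_1\Ape b\Apd k'_1)\,(k_2 h \Ape a \Apd h' k'_2).
\]
On the right, unfolding $\varphi$ gives $k \Ape [b(h\Ape a\Apd h')] \Apd k'$. So the core task reduces to showing these two expressions agree.

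Next I would break the right-hand side down using, in order, the compatibility relation $h\Ape(a\Apd g) = (h\Ape a)\Apd g$, then the \emph{symmetric} right action axiom \ref{rpma-3} (applied with $a = b$ and $b = h\Ape a$), and finally the left action axiom \ref{lpma-2}. Concretely, symmetry of $\Apd$ lets me expand $[b\,((h\Ape a)\Apd h')]\Apd k' = (b\Apd k'_1)(h\Ape a \Apd h' k'_2)$, and then applying $k\Ape$ and using \ref{lpma-2} gives exactly $(k_1\Ape b\Apd k'_1)(k_2 h\Ape a\Apd h' k'_2)$. This matches the left-hand side, establishing (1).

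For part (2), I would mirror the argument: since $B = H\triangleright\varphi(A)\triangleleft H$ on the left as well, it suffices to compute $(h\triangleright\varphi(a)\triangleleft h')\ast \varphi(b)$ evaluated at $k\otimes k'$ and show it equals $\varphi((h\Ape a\Apd h')\,b)(k\otimes k')$. The calculation is dual to that of (1), now invoking the \emph{symmetric} left action axiom \ref{lpma-3} instead of \ref{rpma-3}, together with \ref{rpma-2} and the compatibility $h\Ape(a\Apd g)=(h\Ape a)\Apd g$. The main (and essentially only) subtlety in both parts is recognizing where the symmetry hypothesis is used: without \ref{rpma-3} (respectively \ref{lpma-3}) one cannot separate the factor $b$ from $(h\Ape a\Apd h')$ when applying the right (respectively left) partial action, and the identification of the product with an element of $\varphi(A)$ breaks down. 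Everything else is a bookkeeping exercise with Sweedler notation.
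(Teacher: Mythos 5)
Your proposal is correct and takes essentially the same approach as the paper: both arguments establish $\varphi(b(h\Ape a\Apd h')) = \varphi(b)\ast(h\triangleright\varphi(a)\triangleleft h')$ by evaluating at an arbitrary $k\otimes k'\in H\otimes H$, with the symmetric axiom as the crucial ingredient, and part (2) handled dually. Your intermediate bookkeeping is in fact slightly more direct than the paper's (you apply the symmetric right-action axiom and then the left-action axiom outright, where the paper inserts and reabsorbs factors $k_2\Ape 1_A$ and $1_A\Apd k'_2$), but this is a cosmetic difference, not a different route.
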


\vu

Below, we will see the relation between any globalization and the standard one. In the case of left partial actions, in \cite{AB} the authors have shown that there exists an algebra epimorphism from any globalization to the standard one. A analogous result also holds for partial bimodule algebras.

\begin{teo}\label{teo3}
	Let $(B', \theta)$ be a globalization of a unital partial $H$-bimodule algebra $A$. Then, there exists an algebra epimorphism from $(B',\theta)$ onto the standard globalization $(B,\varphi)$.
\end{teo}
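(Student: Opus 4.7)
The natural strategy is to define $\Phi : B' \to B$ by extracting from each $b' \in B'$ its ``$A$-values'' after arbitrary left/right $H$-translation. Concretely, I would set
$$\Phi(b')(k \otimes k') \, := \, \theta^{-1}\bigl(\theta(1_A)(k \triangleright b' \triangleleft k')\theta(1_A)\bigr), \qquad k,k' \in H,$$
so that $\Phi(b')$ is a candidate element of $\H = \mathrm{Hom}_\Bbbk(H \otimes H, A)$, subject to checking that the argument of $\theta^{-1}$ lies in $\theta(A)$.

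First I would verify well-definedness. Since $B' = H \triangleright \theta(A) \triangleleft H$, any $b' \in B'$ can be written as a finite sum $\sum h_i \triangleright \theta(a_i) \triangleleft l_i$, and item (iv) of Lemma \ref{lemaco} gives
$$\theta(1_A)(k \triangleright b' \triangleleft k')\theta(1_A) \, = \, \sum \theta\bigl(kh_i \Ape a_i \Apd l_i k'\bigr) \, \in \, \theta(A);$$
by the injectivity of $\theta$, the right-hand side, and hence $\Phi(b')(k\otimes k')$, depends only on $b'$ itself, not on any chosen decomposition. A direct computation then shows
$$\Phi(h \triangleright \theta(a) \triangleleft l)(k \otimes k') \, = \, kh \Ape a \Apd lk' \, = \, \varphi(a)(kh \otimes lk') \, = \, (h \triangleright \varphi(a) \triangleleft l)(k \otimes k'),$$
so $\Phi(h \triangleright \theta(a) \triangleleft l) = h \triangleright \varphi(a) \triangleleft l$ and $\Phi$ sends $B'$ into $B$. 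Since elements of this form generate $B$, surjectivity follows at once, and the identity $\Phi \circ \theta = \varphi$ is obtained by taking $b' = \theta(a)$, $h = l = 1_H$ in the defining formula and again using Lemma \ref{lemaco}(iv).

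The one remaining step is multiplicativity. For two generators $b'_1 = h \triangleright \theta(a) \triangleleft l$ and $b'_2 = h' \triangleright \theta(b) \triangleleft l'$ I would expand the product $b'_1 b'_2$ in $B'$ using item (i) of Lemma \ref{lemaco}, apply $\Phi$, and, in parallel, compute the convolution $\Phi(b'_1) \ast \Phi(b'_2)$ in $\H$ directly from the bimodule structure of $\H$. Both computations are governed by precisely the same Sweedler manipulations already performed when verifying that $\varphi$ satisfies condition $(1)$ of Definition \ref{biglobalizacao} in the construction of the standard globalization, so they must yield the same answer. I expect this bookkeeping -- keeping track of several Sweedler indices together with repeated use of the partial action axioms and of Lemma \ref{lemaco} -- to be the main obstacle, though the difficulty is organizational rather than conceptual.
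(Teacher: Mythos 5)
Your proposal is correct and follows essentially the same route as the paper: the same map $h\triangleright\theta(a)\triangleleft l\mapsto h\triangleright\varphi(a)\triangleleft l$, with well-definedness obtained from the sandwich $\theta(1_A)(k\triangleright b'\triangleleft k')\theta(1_A)$ and Lemma \ref{lemaco}(iv), and multiplicativity from Lemma \ref{lemaco}(i) applied to both globalizations. Your only (harmless) variation is to package the map intrinsically as $\Phi(b')(k\otimes k')=\theta^{-1}\bigl(\theta(1_A)(k\triangleright b'\triangleleft k')\theta(1_A)\bigr)$, which makes independence of the decomposition automatic, whereas the paper defines $\Phi$ on generators and runs the identical computation to check it kills $\ker$-relations.
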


\begin{proof}
	Since $B$ and $B'$ are globalizations for $A$ we have $B = H \triangleright \varphi(A) \triangleleft H$ and $B' = H \triangleright \theta(A) \triangleleft H$. Thus, we define
	$$
	\begin{array}{rcl}
	\Phi:  (B',\theta) & \to & (B,\varphi)\\
	h \triangleright \theta(a) \triangleleft k & \mapsto & h \triangleright \varphi(a) \triangleleft k
	\end{array}
	$$
	that is well-defined by the definition of $\varphi$ and by the injectivity of $\theta$. Indeed, suppose $x = \sum h_i \triangleright \theta(a_i) \triangleleft k_i = 0$. So, for all $h', k'$ in H, we have
	$$
	\begin{array}{rcl}
	0
	& = & \theta(1_A)( h' \triangleright \sum{h_i \triangleright \theta(a_i) \triangleleft k_i} \triangleleft k')\theta(1_A) \\
	& = & \theta(1_A)( \sum h' h_i \triangleright \theta(a_i) \triangleleft k_i k')\theta(1_A) \\
	& = & \theta(\sum h' h_i \Ape a_i \Apd k_i k').
	\end{array}
	$$
	
	Since $\theta$ is injective, we have $\sum h' h_i \Ape a_i \Apd k_i k' = 0$ for all $h', k'$ in $H$. So,
	$$
	\begin{array}{rcl}
	\Phi(x) (h' \otimes k')  & = & \sum{(h_i \triangleright \varphi(a_i) \triangleleft k_i)} (h' \otimes k') \\
	& = & \sum h' h_i \Ape a_i \Apd k_i k' \\
	& = & 0
	\end{array}
	$$
	and $\Phi$ is well defined. From Lemma \ref{lemaco}, it follows that $\Psi$ is an algebra epimorphism.
\end{proof}

Motivated by the above theorem we have the following definition for a minimal globalization.

\begin{dfn}
	Let $A$ be a partial bimodule algebra. A globalization $(B',\theta)$ is called \emph{minimal} if for any sub-bimodule $M$ of $B'$ in which $\theta(1_A) M \theta(1_A) = 0$ we have that $M = 0$.
\end{dfn}
A natural question arises from this definition: What relation is there between the notion of a minimal globalization and the bijectivity of the map $\Phi$ defined in Theorem \ref{teo3}? The answer to this question is given in the next remark.

\begin{obs}
	If $(B', \theta)$ is a minimal globalization for $A$, then $\Phi$ is an isomorphism.

\vu
	
	In fact, if $x = \sum{h_i \triangleright \theta(a_i) \triangleleft k_i}$ lies in kernel of $\Phi$ we have
	$$
	\begin{array}{rcl}
	0
	& = & (h \triangleright \Phi(x) \triangleleft k)(1_H \otimes 1_H)\\
	& = & (h \triangleright \sum{h_i \triangleright \varphi(a_i) \triangleleft k_i} \triangleleft k )(1_H \otimes 1_H)\\
	& = & \sum{h h_i \Ape a_i \Apd k_i k}
	\end{array}
	$$
	for all $h,k$ in H. Then, taking $M$ as the sub-bimodule generated by $\sum{h_i \triangleright \theta(a_i) \triangleleft k_i}$, we have
	$$
	\begin{array}{rcl}
	\theta(1_A)(\sum{h \triangleright h_i \triangleright \theta(a_i) \triangleleft k_i \triangleleft k})\theta(1_A)
	& = & \theta(1_A)(\sum{h h_i \triangleright \theta(a_i) \triangleleft k_i k})\theta(1_A) \\
	& \Ref{lemaco} & \theta(\sum{ h h_i \Ape a_i \Apd k_i k })\\
	& = & 0.
	\end{array}
	$$
	
	It follows from the minimality of $B'$ that $M=0$ and so $x = 0$. Therefore, $\Phi$ is an isomorphism.
\end{obs}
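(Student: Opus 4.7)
The plan is to use Theorem \ref{teo3}, which already provides a surjective algebra map $\Phi : B' \to B$, and to prove injectivity of $\Phi$ by leveraging minimality of $(B',\theta)$. So the only task is to show $\ker\Phi = 0$.

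First I would pick an arbitrary $x = \sum_i h_i \triangleright \theta(a_i) \triangleleft k_i$ in $\ker\Phi$ and translate $\Phi(x)=0$ into a concrete family of equations in $A$. Since $\Phi(x) = \sum_i h_i \triangleright \varphi(a_i) \triangleleft k_i$ lives in $\mathcal{H} = \mathrm{Hom}_\Bbbk(H\otimes H,A)$, evaluating it at any $h\otimes k$ and unfolding the definitions of the $H$-bimodule structure on $\mathcal{H}$ and of $\varphi$ yields
$$\Phi(x)(h\otimes k) \;=\; \sum_i hh_i \Ape a_i \Apd k_ik.$$
Therefore $\Phi(x)=0$ is equivalent to $\sum_i hh_i \Ape a_i \Apd k_i k = 0$ for every $h,k \in H$.

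Next I would form the $H$-sub-bimodule $M \subseteq B'$ generated by $x$, i.e.\ the linear span of the elements $h \triangleright x \triangleleft k = \sum_i hh_i \triangleright \theta(a_i) \triangleleft k_i k$, and check that $\theta(1_A)\,M\,\theta(1_A)=0$. This is exactly where Lemma \ref{lemaco}(iv) does the work: the identity $\theta(1_A)(g \triangleright \theta(a) \triangleleft l)\theta(1_A) = \theta(g \Ape a \Apd l)$, applied term-by-term, gives
$$\theta(1_A)(h \triangleright x \triangleleft k)\theta(1_A) \;=\; \theta\!\Bigl(\sum_i hh_i \Ape a_i \Apd k_i k\Bigr) \;=\; 0,$$
the final equality being the family of relations obtained in the previous paragraph. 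Hence $\theta(1_A)\,M\,\theta(1_A) = 0$.

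Finally, minimality of the globalization $(B',\theta)$ forces $M = 0$; in particular $x = 1_H \triangleright x \triangleleft 1_H \in M$ is zero, so $\ker\Phi = 0$ and $\Phi$ is an isomorphism. I do not expect a real obstacle: the argument is a direct chase through the definitions, and the only mildly delicate point is to remember that the $H$-bimodule structure on $B'$ lets us move an external $h$ (respectively $k$) past the $\triangleright$ (respectively $\triangleleft$) to land exactly in the form where Lemma \ref{lemaco}(iv) applies.
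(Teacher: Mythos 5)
Your proposal is correct and follows essentially the same argument as the paper: translate $\Phi(x)=0$ into the relations $\sum_i hh_i \Ape a_i \Apd k_i k = 0$ for all $h,k\in H$ (you evaluate $\Phi(x)$ at $h\otimes k$ where the paper evaluates $h\triangleright\Phi(x)\triangleleft k$ at $1_H\otimes 1_H$, which is the same computation), then apply Lemma \ref{lemaco}(iv) to the sub-bimodule $M$ generated by $x$ to get $\theta(1_A)M\theta(1_A)=0$ and invoke minimality. No gaps.
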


\vu

%%%%%%%%%%%%%%%%%%%%%%%%%%%%%%%%%%%%%%%%%%%%%%%%%%%%%%%%%%%%%%%%%%%%%%%%%%%%%%%%%%%%%%%%%%%%
%%%%%%%%%%%%%%%%%%%%%%%%%%%%%%%%%%%%%%%%%%%%%%%%%%%%%%%%%%%%%%%%%%%%%%%%%%%%%%%%%%%%%%%%%%%%
\subsection{Globalization for partial $H$-bicomodule algebras}${}$
%%%%%%%%%%%%%%%%%%%%%%%%%%%%%%%%%%%%%%%%%%%%%%%%%%%%%%%%%%%%%%%%%%%%%%%%%%%%%%%%%%%%%%%%%%%%

\vd

We start this section discussing about the relation between partial bicoactions and partial biactions. This will be useful to construct  a globalization for partial bicomodule algebras.

\vu

Throughout this section we will denote by $\triangleright$ and $\triangleleft$ the (global) actions and by $\Ape$ and $\Apd$ the partial ones. When on doubt, we will use the global notation. We recall that the finite dual $H^0$ of a Hopf algebra $H$ is also a Hopf algebra, and $H^0 = H^\ast$ if $H$ is finite dimensional.

\vu

\begin{dfn}
	We say that $H^0$ \emph{separates points} if for all non-zero $h \in H$ there exists $f \in H^0$ such that $f(h) \not= 0$.
\end{dfn}

Let $B$ be an algebra and $\rho:B \to B\otimes H$ a linear map, denoted by $\rho(b)=b^{+0}\otimes b^{+1}$. We can define the induced linear map $\triangleright_\rho:H^\ast\otimes B\to B$ by $f\triangleright b= b^{+0}f(b^{+1})$. It is well known that $A$ is an $H$-comodule via $\rho$ if and only if it is an $H^\ast$-module via $\triangleright_\rho$. Moreover, the Jacobson's density theorem ensures that the same statement remains  true replacing $H^\ast$ by $H^0$, whenever $H^0$ separates points.

\begin{teo}
	\label{Hcmag=>H0mag}
	If $A$ is a right partial $H$-comodule algebra, then $A$ is a left partial $H^0$-module algebra via the corresponding induced action. \qed
\end{teo}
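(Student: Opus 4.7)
Define the induced map $\Ape: H^0 \otimes A \to A$ by $f \Ape a := a\Zp\, f(a\Up)$. The task is to verify axioms (LPMA1) and (LPMA2) of Definition \ref{lpma} with $H$ replaced by $H^0$. Axiom (LPMA1) is immediate: the unit of $H^0$ (the convolution algebra) is the counit $\Ve_H$, so $\Ve_H \Ape a = a\Zp\, \Ve_H(a\Up) = a$ by (RPCA2). All the content is in (LPMA2).

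I would first expand
\[
f \Ape [a(g \Ape b)] = (a\,b\Zp)\Zp\, f\bigl((a\,b\Zp)\Up\bigr)\, g(b\Up),
\]
apply multiplicativity of $\rho$ (RPCA1), and split $f$ via the defining identity $f_1(x) f_2(y) = f(xy)$ of the coproduct in $H^0$, obtaining
\[
f \Ape [a(g \Ape b)] = (f_1 \Ape a)\cdot\bigl[(b\Zp)\Zp\, f_2\bigl((b\Zp)\Up\bigr)\, g(b\Up)\bigr].
\]
The bracketed factor is the pairing of $(\rho \otimes I_H)\rho(b)$ with $f_2 \otimes g$. Partial coassociativity (RPCA3) identifies this tensor with $1_A\Zp\, b\Zp \otimes 1_A\Up\, (b\Up)_1 \otimes (b\Up)_2$. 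A second application of the coproduct rule in $H^0$, now to $f_2$, together with the description of the convolution in $H^0$, collapses the bracket to $[(f_2)_1 \Ape 1_A]\,[(f_2)_2 g \Ape b]$.

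Coassociativity in $H^0$ then allows the relabelling $f_1 \otimes (f_2)_1 \otimes (f_2)_2 = (f_1)_1 \otimes (f_1)_2 \otimes f_2$, so the whole expression becomes $[(f_1)_1 \Ape a]\,[(f_1)_2 \Ape 1_A]\,[f_2 g \Ape b]$. The remaining step is the identity
\[
[(f_1)_1 \Ape a]\,[(f_1)_2 \Ape 1_A] = f_1 \Ape a,
\]
which unfolds to $a\Zp\, 1_A\Zp\, f_1(a\Up\, 1_A\Up) = a\Zp\, f_1(a\Up)$ and follows immediately from $\rho(a) = \rho(a \cdot 1_A) = \rho(a)\rho(1_A)$, i.e.\ from (RPCA1). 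This produces (LPMA2): $f \Ape [a(g \Ape b)] = (f_1 \Ape a)(f_2 g \Ape b)$.

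The main obstacle is the correction factor $\rho(1_A)$ produced by partial (rather than global) coassociativity in (RPCA3). The crucial observation is that, after regrouping the iterated coproducts of $f \in H^0$ through coassociativity in $H^0$, this correction is absorbed exactly into the factor $[(f_1)_2 \Ape 1_A]$ and recombines with $[(f_1)_1 \Ape a]$ into $f_1 \Ape a$ via the multiplicativity of $\rho$ applied to $a \cdot 1_A$. Note that no hypothesis on $H^0$ separating points is invoked in this direction, so the Jacobson density theorem plays no role here; it is the passage from a partial $H^0$-module algebra back to a partial $H$-comodule algebra that requires that extra assumption.
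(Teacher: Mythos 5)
Your proof is correct. The paper states this theorem with no proof at all (it is marked \qed as immediate), so there is nothing to diverge from; your argument is the natural one and is essentially the reverse of the computation the paper does give for the converse theorem, where axiom (RPCA3) is checked against the unital form $h\Ape(g\Ape b)=(h_1\Ape 1_A)(h_2g\Ape b)$ of (LPMA2). The one genuinely delicate point, absorbing the correction factor $\rho(1_A)$ coming from (RPCA3) into $f_1\Ape a$ via coassociativity in $H^0$ and $\rho(a)=\rho(a)\rho(1_A)$, is handled correctly.
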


And, as a converse of the above theorem, we have the following result.

\begin{teo}
	Let $A$ be a unital  algebra and $\rho: A \to A \otimes H$ a linear map. Assume that $H^0$ separates points. If  $A$ is a left partial $H^0$-module algebra via the induced action, then $(A,\rho)$ is a right partial $H$-comodule algebra.
\end{teo}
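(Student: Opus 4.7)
The plan is to verify the three axioms (RPCA-1), (RPCA-2), (RPCA-3) of a right partial $H$-comodule algebra one at a time, by dualizing the corresponding axioms of a unital left partial $H^0$-module algebra as reformulated in Remark \ref{defcaen}. Throughout, the induced action is $f\triangleright a = a\Zp f(a\Up)$ for $f\in H^0$, and the main tool is that, since $H^0$ separates points, the evaluation pairing yields injective linear maps $A\otimes H^{\otimes n}\to \mathrm{Hom}_{\Bbbk}((H^0)^{\otimes n},A)$ for every $n\geq 1$. Concretely, if $y\in A\otimes H\otimes\cdots\otimes H$ satisfies $(I_A\otimes f_1\otimes\cdots\otimes f_n)(y)=0$ for all $f_i\in H^0$, then $y=0$; this follows by expanding $y$ in a basis of $A$ and applying separation of points in each $H$-slot successively.

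Axiom (RPCA-2) is immediate: since $1_{H^0}=\varepsilon_H$, the condition $1_{H^0}\triangleright a=a$ is literally $(I_A\otimes\varepsilon)\rho(a)=a$, so no separation argument is even needed here.

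For axiom (RPCA-1) I would apply $I_A\otimes f$ to both $\rho(ab)$ and $\rho(a)\rho(b)$. The left side equals $f\triangleright(ab)$, and the right side equals $a\Zp b\Zp f(a\Up b\Up) = a\Zp b\Zp f_1(a\Up)f_2(b\Up) = (f_1\triangleright a)(f_2\triangleright b)$, using that the product in $H^0$ is dual to $\Delta_H$ and conversely $\Delta_{H^0}$ is dual to multiplication in $H$. The multiplicativity axiom of Remark \ref{defcaen} gives equality for all $f\in H^0$, and separation of points promotes this to the equality $\rho(ab)=\rho(a)\rho(b)$ in $A\otimes H$.

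For axiom (RPCA-3), the analogous strategy: apply $I_A\otimes f\otimes g$ to both sides. On the left, $(\rho\otimes I_H)\rho(a)$ pairs with $f\otimes g$ to yield $a\Zp\Zp f(a\Zp\Up)g(a\Up) = f\triangleright(g\triangleright a)$. On the right, $(\rho(1_A)\otimes 1_H)[(I_A\otimes\Delta)\rho(a)]$ pairs with $f\otimes g$ to give $1_A\Zp a\Zp\, f(1_A\Up a\Up_1)g(a\Up_2) = 1_A\Zp f_1(1_A\Up)\cdot a\Zp f_2(a\Up_1)g(a\Up_2) = (f_1\triangleright 1_A)(f_2 g\triangleright a)$. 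These agree by the partial-action axiom (LPMA2.\ref{lpma-2.2}) of Remark \ref{defcaen}, so once again separation of points yields the desired equality in $A\otimes H\otimes H$.

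The only delicate step is the bookkeeping in (RPCA-3): keeping track of the fact that $\Delta_{H^0}$ is dual to $m_H$ while $m_{H^0}$ is dual to $\Delta_H$, so that the factor $f(1_A\Up a\Up_1)=f_1(1_A\Up)f_2(a\Up_1)$ and the factor $f_2(a\Up_1)g(a\Up_2)=(f_2g)(a\Up)$ line up correctly with the two sides of the module-algebra axiom. Everything else is routine given Remark \ref{defcaen} and the separation-of-points hypothesis.
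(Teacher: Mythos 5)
Your proposal is correct and follows essentially the same route as the paper: both verify (RPCA-1)--(RPCA-3) by pairing with $I_A\otimes f$ (resp.\ $I_A\otimes f\otimes g$), identifying the results with the unital partial-module-algebra axioms of Remark \ref{defcaen}, and then invoking separation of points to lift the scalar identities back to $A\otimes H^{\otimes n}$. The only difference is cosmetic: you make explicit the injectivity of $A\otimes H^{\otimes n}\to \mathrm{Hom}_{\Bbbk}((H^0)^{\otimes n},A)$, which the paper leaves implicit in its closing sentence.
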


\begin{proof}
	Suppose that $A$ is a left partial $H^0$-module algebra. For all $f,g \in H^0$ and $a,b \in A$
	\vd
	$$
	\begin{array}{lrcl}
	(\ref{rpca-1})   & (I_A \otimes f) (\rho (ab))
	& = &  (ab)^{+0} f((ab)^{+1}) \\
	& & = &  f \triangleright ab  \\
	& & = &   (f_1 \triangleright a)(f_2 \triangleright b)\\
	& & = &   a^{+0} f_1(a^{+1}) b^{+0} f_2(b^{+1}) \\
	& & = &   a^{+0} b^{+0} f(a^{+1} b^{+1})\\
	& & = &   (I_A \otimes f)(\rho (a) \rho (b) ) \\
    & & & \\
	(\ref{rpca-2})    & (I_A \otimes \varepsilon_H) \rho(a)
	& = &   a^{+0} \varepsilon(a^{+1}) \\
	& & = &   \varepsilon \triangleright a \\
	& & = &    1_{H^0} \triangleright a \\
	& & = &    a\\
	& & &  \\
	(\ref{rpca-3})    & (I_A \otimes f \otimes g)((\rho \otimes I_H) \rho (a))
	& = &  a^{+0+0} f(a^{+0+1}) g(a^{+1})  \\
	& & = &  (f \triangleright a^{+0}) g(a^{+1})  \\
	& & = &  f \triangleright (g \triangleright a)  \\
	& & = &  (f_1 \triangleright 1_A)(f_2 \ast g \triangleright a) \\
	& & = &  (f_1({1_A}^{+1}){1_A}^{+0})(f_2 \ast g (a^{+1})a^{+0})  \\
	& & = &  (f_1({1_A}^{+1}){1_A}^{+0})(f_2 (a^{+1}{}_1) g(a^{+1}{}_{2}) a^{+0})  \\
	& & = &  f_1({1_A}^{+1})f_2 (a^{+1}{_1})g(a^{+1}{_2}) {1_A}^{+0} a^{+0}  \\
	& & = &  f({1_A}^{+1} a^{+1}{_1})g(a^{+1}{_2}) {1_A}^{+0} a^{+0}  \\
	& & = &  (I_A \otimes f \otimes g) ({1_A}^{+0} a^{+0} \otimes {1_A}^{+1} a^{+1}{_1}  \otimes a^{+1}{_2} ) \\
	& & = &  (I_A \otimes f \otimes g)( (\rho(1_A) \otimes 1_H) (I_A \otimes \Delta) \rho (a)).
	\end{array}
	$$
	Since $H^0$ separates points, the required statement follows.
\end{proof}

\begin{obs}
	The same statements can be obtained replacing the left partial action by a right one and the right partial coaction by a left one. Moreover, these partial actions are global if and only if the partial coactions are global.
\end{obs}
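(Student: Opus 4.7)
The plan is to reduce the two claims to the right--coaction / left--action case already proved in the preceding two theorems, and then to exploit the fact that $H^0$ separates points.

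For the side--swapped analogue, I would run the proofs of Theorem~\ref{Hcmag=>H0mag} and of the theorem preceding this remark symbol by symbol with sides reversed: starting from a left partial $H$-comodule structure $\lambda(a)=a\Um\otimes a\Zm$, define the induced right $H^0$-action $a\triangleleft f:=f(a\Um)a\Zm$ and verify the axioms of a right partial $H^0$-module algebra by testing each coaction identity against arbitrary $f,g\in H^0$ and using $(f\ast g)(h)=f(h_1)g(h_2)$. Going the other way, assuming $a\triangleleft f=f(a\Um)a\Zm$ comes from a right partial $H^0$-action, I would rewrite each left partial $H$-comodule axiom after applying $I_A\otimes f$ or $I_A\otimes f\otimes g$ to both sides and then invoke separation of points to remove the test functionals. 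The computations differ from the ones already written only in Sweedler labelling and in the order in which the functionals are convolved.

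For the global--iff--global statement, the characterizations noted in the paper give: a partial $H$-comodule algebra $(A,\rho)$ is global iff $\rho(1_A)=1_A\otimes 1_H$, and a partial $H^0$-module structure is global iff $f\triangleright 1_A=\Ve_{H^0}(f)1_A=f(1_H)1_A$ for every $f\in H^0$. Since the induced action satisfies $f\triangleright 1_A=1_A\Z f(1_A\U)$, the condition $\rho(1_A)=1_A\otimes 1_H$ at once yields $f\triangleright 1_A=f(1_H)1_A$. Conversely, assuming $1_A\Z f(1_A\U)=f(1_H)1_A$ for all $f\in H^0$, I would write $\rho(1_A)-1_A\otimes 1_H=\sum_i c_i\otimes g_i$ with the $c_i$ linearly independent in $A$; the hypothesis gives $\sum_i c_i\,f(g_i)=0$ for every $f\in H^0$, hence $f(g_i)=0$ for each $i$ and each $f$, and separation of points forces $g_i=0$ for all $i$. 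Thus $\rho(1_A)=1_A\otimes 1_H$, and the left-sided analogue with $\lambda$ in place of $\rho$ follows by the same argument.

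I expect the only delicate step to be the last one: promoting separation of points from $H$ to $A\otimes H$, so that an equality of tensors can be recovered from its image under every $I_A\otimes f$. Once this standard reduction is available, everything else is a routine bookkeeping exercise paralleling the two theorems just established.
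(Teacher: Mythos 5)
The paper states this remark without any proof, so there is nothing to compare line by line; your proposal supplies exactly the two verifications the authors leave implicit, and both are correct. The side-swapped claim is indeed a symbol-by-symbol mirror of Theorem~\ref{Hcmag=>H0mag} and its converse, and your globality argument is the intended one: the implication ``coaction global $\Rightarrow$ induced action global'' is immediate from $f\triangleright 1_A=1_A^{+0}f(1_A^{+1})$, while the converse genuinely needs the standing hypothesis that $H^0$ separates points, which you invoke correctly. The ``delicate step'' you flag at the end is already settled by your own argument: writing $\rho(1_A)-1_A\otimes 1_H=\sum_i c_i\otimes g_i$ with the $c_i$ linearly independent and concluding $f(g_i)=0$ for all $f\in H^0$ is precisely the standard promotion of separation of points from $H$ to $A\otimes H$, so no gap remains. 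The only caveat worth recording is that without the separation hypothesis the ``action global $\Rightarrow$ coaction global'' direction can fail, so the remark should be read as living under the assumptions of the theorem it follows.
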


\begin{prop}
	\label{Hbcma=>H0bma}
	Let $A$ be a unital algebra, $\rho:A \to A \otimes H$, $\lambda:A \to H \otimes A$ linear maps, and $\triangleright$ and $\triangleleft$ the respective induced maps. If $(\lambda, A, \rho)$ is a (partial) $H$-bicomodule algebra, then $(\triangleright, A, \triangleleft)$ is a (partial) $H^0$-bimodule algebra. The converse is also true whenever $H^0$ separates points.
\end{prop}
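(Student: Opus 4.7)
The one-sided parts of the claim are already supplied: Theorem~\ref{Hcmag=>H0mag} (applied to $\rho$) together with the left/right swap recorded in the preceding Remark (applied to $\lambda$) gives both the induced (partial) $H^{0}$-module algebra structures and, under the separating-points hypothesis, the converses. Hence the proposition reduces to establishing that, under the respective hypotheses, the bicomodule compatibility $(I_H\otimes\rho)\lambda(a)=(\lambda\otimes I_H)\rho(a)$ is equivalent to the bimodule compatibility $f\triangleright(a\triangleleft g)=(f\triangleright a)\triangleleft g$ for all $f,g\in H^{0}$ and $a\in A$.

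For the forward direction I would introduce the auxiliary linear map $E_{f,g}:H\otimes A\otimes H\to A$ defined by $E_{f,g}(h\otimes b\otimes k)=g(h)\,b\,f(k)$, and unwind both sides of the target identity using the induced action formulas $f\triangleright a=a\Zp f(a\Up)$ and $a\triangleleft g=g(a\Upm)\,a\Zpm$. A straightforward calculation yields
\[
(f\triangleright a)\triangleleft g=E_{f,g}\bigl((\lambda\otimes I_H)\rho(a)\bigr),\qquad f\triangleright(a\triangleleft g)=E_{f,g}\bigl((I_H\otimes\rho)\lambda(a)\bigr).
\]
So the bicomodule identity, pulled back through $E_{f,g}$ for every $f,g\in H^{0}$, immediately forces the bimodule identity; this direction is pure bookkeeping.

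For the converse, assuming $H^{0}$ separates points of $H$, I would argue that the collection $\{E_{f,g}\}_{f,g\in H^{0}}$ is jointly separating on $H\otimes A\otimes H$. Concretely, set $x:=(I_H\otimes\rho)\lambda(a)-(\lambda\otimes I_H)\rho(a)$ and write $x=\sum_i h_i\otimes a_i\otimes k_i$ with $\{a_i\}$ linearly independent in $A$; the bimodule identity gives $\sum_i g(h_i)f(k_i)\,a_i=0$ for all $f,g\in H^{0}$, so by linear independence $\sum_i g(h_i)f(k_i)=0$ for each basis index. Since $H^{0}$ separates points of $H$, the subspace $H^{0}\otimes H^{0}\subseteq(H\otimes H)^{*}$ separates points of $H\otimes H$ by a standard tensor argument, hence each partial sum $\sum h_i\otimes k_i$ vanishes and therefore $x=0$, which is the desired bicomodule compatibility.

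The main obstacle is exactly this last density step on the triple tensor $H\otimes A\otimes H$: one must combine linear independence in $A$ with the fact that $H^{0}\otimes H^{0}$ separates $H\otimes H$ whenever $H^{0}$ separates $H$. This is not deep—it is the same flavour of density reasoning already used in the one-sided case of Theorem~\ref{Hcmag=>H0mag}—but it is the one place where the separating-points hypothesis is actually needed.
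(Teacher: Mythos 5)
Your proposal is correct and takes essentially the same route as the paper: the paper's proof likewise reduces everything to the compatibility condition and establishes $f\triangleright(a\triangleleft g)=(g\otimes I_A\otimes f)[(I_H\otimes\rho)\lambda(a)]$ and $(f\triangleright a)\triangleleft g=(g\otimes I_A\otimes f)[(\lambda\otimes I_H)\rho(a)]$, which is precisely your map $E_{f,g}$. The only difference is that the paper leaves the separating-points density step for the converse implicit, while you spell it out (correctly in substance, modulo the small notational slip that linear independence of the $a_i$ forces each grouped $H\otimes H$ component to vanish rather than a further sum over $i$).
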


\begin{proof}It only remains to show the compatibility between the (partial) (co)actions. Let $a,b \in A$ and $f,g \in H^0$, so
	$$
	\begin{array}{rcl}
	f \triangleright (a \triangleleft g)
	& = & f \triangleright (g(a^{-1}) a^{-0}) \\
	& = & g(a^{-1}) a^{-0 +0} f (a^{-0 +1}) \\
	& = & (g\otimes I_A\otimes f)(a^{-1}\otimes a^{-0 +0} \otimes a^{-0 +1}) \\
	& = & (g\otimes I_A\otimes f)[(I_H\otimes\rho)\lambda(a)]
	\end{array}
	$$
	and
	$$
	\begin{array}{rcl}
	(f \triangleright a) \triangleleft g
	& = & (f (a^{+1}) a^{+0} ) \triangleleft g\\
	& = & (g\otimes I_A\otimes f)(a^{+0 -1}\otimes a^{+0 -0} \otimes a^{+1})\\
	& = & (g\otimes I_A\otimes f)[(\lambda\otimes I_H)\rho(a)].
	\end{array}
	$$
\end{proof}

The following proposition will be useful in the sequel.

\begin{prop}\label{vesgosequiv}
	Assume that $H^0$ separate points. Let $B$ be an $H$-bicomodule algebra and consider on $B$ the induced structure of
	$H^0$-bimodule algebra. If there exists a unital subalgebra $A$ of $B$, then the following statements are equivalent:

\vu
	
	\begin{itemize}
		\item[(i)] $(a \triangleleft f)(g \triangleright b) = (a \triangleleft f) 1_A (g \triangleright b)$ in $A$, $\text{for all}\ a,b \in A$ and $f,g \in H^0$.

\vu
		\item[(ii)] $(\lambda(a) \otimes 1_H)(1_H \otimes \rho(b)) = (\lambda(a) \otimes 1_H)(1_H \otimes 1_A \otimes 1_H)(1_H \otimes \rho(b))$ in $H \otimes A \otimes H$, $\text{for all}\ a,b \in A$.\label{vesgoequiv-2}
	\end{itemize}
\end{prop}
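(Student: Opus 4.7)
The strategy is that the pairings $f \otimes I_A \otimes g$, with $f, g \in H^0$, faithfully translate the tensor identity (ii) into the algebraic identity (i), and the separating hypothesis allows us to invert this translation.

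For the implication (ii) $\Rightarrow$ (i), I would simply apply $f \otimes I_A \otimes g$ to both sides of the equation in (ii). Writing $\lambda(a) = a^{-1} \otimes a^{-0}$ and $\rho(b) = b^{+0} \otimes b^{+1}$ and using the definitions $f \triangleright b = b^{+0} f(b^{+1})$ and $a \triangleleft g = g(a^{-1}) a^{-0}$ of the induced actions, direct multiplication in $H \otimes B \otimes H$ produces $(a \triangleleft f)(g \triangleright b)$ on the left-hand side and $(a \triangleleft f)\, 1_A\, (g \triangleright b)$ on the right, which is exactly (i).

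For the converse, the essential tool is the following separation lemma: whenever $H^0$ separates points of $H$, then for any vector space $V$ and any $y \in H \otimes V \otimes H$, the vanishing of $(f \otimes I_V \otimes g)(y)$ for all $f, g \in H^0$ forces $y = 0$. I would prove this by choosing a basis of $V$ to reduce to the case $V = \Bbbk$, and then applying the one-variable separating property (namely, $f(x) = 0$ for all $f \in H^0$ implies $x = 0$) successively in each of the two $H$-slots. Granted this lemma, under assumption (i) the difference $y \in H \otimes B \otimes H$ of the two sides of (ii) pairs to zero against every $f \otimes I_B \otimes g$ by the computation of the previous paragraph, so $y = 0$; this gives the equality in (ii). To see moreover that both sides of (ii) lie in the subspace $H \otimes A \otimes H$, I would fix a linear complement $C$ of $A$ in $B$ with projection $\pi_C \colon B \to C$ and apply the separation lemma with $V = C$ to $(I_H \otimes \pi_C \otimes I_H)$ of either side, using the fact that every element $(a \triangleleft f)(g \triangleright b)$ already lies in $A$ by hypothesis (i), so its image under $\pi_C$ vanishes.

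The only genuinely subtle step is the separation lemma in the second paragraph; the rest is Sweedler bookkeeping and linearity. The hypothesis that $H^0$ separates points enters exactly and only there, which is consistent with the fact that the reverse implication is false without it (one cannot otherwise recover an element of $H \otimes A \otimes H$ from its $H^0$-paired evaluations).
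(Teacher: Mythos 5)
Your proposal is correct and follows essentially the same route as the paper: both directions are handled by pairing against $f \otimes I \otimes g$ and invoking the separating hypothesis to pass from evaluations back to the tensor identity. The only difference is that you spell out the separation lemma for $H \otimes V \otimes H$ and the membership of both sides in $H \otimes A \otimes H$ (via a complement of $A$ in $B$), details the paper leaves implicit with ``which implies'' and ``clearly lies in''.
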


\begin{proof}
	First note that (ii) can be rewritten as $$a^{-1} \otimes a^{-0} b^{+0} \otimes b^{+1} =  a^{-1} \otimes a^{-0} 1_A b^{+0} \otimes b^{+1},$$ for all $a,b \in A$.
	
	Supposing (i), for all $f,g \in H^0$ we have
	$$
	\begin{array}{r@{~}c@{~}l}
	(f \otimes I_A \otimes g)((\lambda(a) \otimes 1_H)(1_H \otimes \rho(b)))
	& = & (f \otimes I_A \otimes g)(a^{-1} \otimes a^{-0} b^{+0} \otimes b^{+1})\\
	& = & f(a^{-1}) a^{-0} b^{+0} g(b^{+1}) \\
	& = & (a \triangleleft f)(g \triangleright b) \\
	& = & (a \triangleleft f)1_A(g \triangleright b) \\
	& = & f(a^{-1}) a^{-0} 1_A b^{+0} g(b^{+1}) \\
	& = & f(a^{-1}) \otimes a^{-0} 1_A b^{+0} \otimes g(b^{+1}) \\
	& = & (f \otimes I_A \otimes g)(a^{-1} \otimes a^{-0} 1_A b^{+0} \otimes b^{+1})\\
	& = & (f \otimes I_A \otimes g)((\lambda(a) \otimes 1_H)(1_H \otimes 1_A \otimes 1_H)(1_H \otimes \rho(b)))
	\end{array}
	$$
		which implies $$(\lambda(a) \otimes 1_H)(1_H \otimes \rho(b)) = (\lambda(a) \otimes 1_H)(1_H \otimes 1_A \otimes 1_H)(1_H \otimes \rho(b)),$$ and, clearly, it lies in $H \otimes A \otimes H$.

\vu

	Conversely, for all $a,b \in A$ and $f,g \in H^0$,
	
	$$
	\begin{array}{rl}
	(a \triangleleft f)(g \triangleright b) & = f(a^{-1})a^{-0}  b^{+0} g(b^{+1}) \\
	& = f(a^{-1})a^{-0} 1_A  b^{+0} g(b^{+1}) \\
	& = (a \triangleleft f)1_A(g \triangleright b)
	\end{array}
	$$
	and it also, clearly, lies in $A$.
	\end{proof}

Recalling from the Example \ref{bi-co-induzida}, if a bicomodule algebra $(\lambda, B, \rho)$ and a unital subalgebra $A$ ($1_A$ not equal to $1_B$)
are given such that  $(\lambda(a) \otimes 1_H)(1_H \otimes \rho(b)) = (\lambda(a) \otimes 1_H)(1_H \otimes 1_A \otimes 1_H)(1_H \otimes \rho(b))$
in $H \otimes A \otimes H$,  for all $a,b \in A$, then one can induce on $A$ an structure of a partial $H$-bicomodule algebra.

Is there a converse of this fact? That is, given any partial $H$-bicomodule algebra $A$ is there a way to obtain an $H$-bicomodule algebra $B$
such that the partial coactions on $A$ are induced by the corresponding coations on $B$?

To answer this question, similarly to the case
of partial bimodule algebras, we need first to introduce the notion of  globalization for partial bicomodule algebras.

\begin{dfn}
	A \emph{globalization} of a partial $H$-bicomodule algebra $(\ldap, A, \rhop)$, is a pair $((\lambda, B, \rho), \theta)$, where $(\lambda, B, \rho)$
is an $H$-bicomodule algebra (with $B$ not necessarily unital) and $\theta:A\to B$ is an algebra monomorphism  such that

\vu

	\begin{enumerate}
		\item $B$ is the $H$-bicomodule algebra  generated by $\theta(A)$, i.e., the smallest $H$-bicomodule algebra containing $\theta(A)$,

\vu

		\item $(\lambda( \theta(a)) \otimes 1_H)(1_H \otimes \rho(\theta(b))) = (I_H \otimes \theta \otimes I_H )((\ldap( a) \otimes 1_H)(1_H \otimes \rhop(b)))$	for all $a,b \in A$.
	\end{enumerate}
\end{dfn}

We will dedicate the rest of this subsection to prove that any partial $H$-bicomodule algebra has a globalization. We will proceed by steps.

\begin{lema}\label{H-subbicomodulogerado}
	Let $(\lambda, B, \rho) $ be an $H$-bicomodule algebra, $A\subseteq B$ a subalgebra and $C=H^\ast\triangleright A\triangleleft H^\ast$, where
$\triangleright$ and $\triangleleft$ are the corresponding actions of $H^*$ on $B$ induced by $\lambda$ and $\rho$ respectively. Then,
the algebra generated by $C$ is the smallest $H$-subbicomodule algebra of $B$ containing $A$.
\end{lema}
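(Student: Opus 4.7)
The plan is to verify four things in order: (i) $A\subseteq C$; (ii) $C$ is stable under both $\rho$ and $\lambda$; (iii) the subalgebra $\langle C\rangle$ generated by $C$ inherits this stability; (iv) $\langle C\rangle$ is the smallest subbicomodule algebra of $B$ containing $A$.

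For (i), taking $f=g=\varepsilon_H$, which is the identity $1_{H^\ast}$ of the convolution algebra $H^\ast$, one has $\varepsilon_H\triangleright a=a\Z\varepsilon_H(a\U)=a=\varepsilon_H(a\Um)a\Zm=a\triangleleft\varepsilon_H$ by the counit axioms, so $A\subseteq C$.

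For (ii), which is the technical core, I would first extract two auxiliary identities. The right-coaction coassociativity $(\rho\otimes I_H)\rho=(I_B\otimes\Delta)\rho$ yields
\[
\rho(f\triangleright a) = a\Z\otimes a\U_1\,f(a\U_2), \qquad a\in B,\ f\in H^\ast,
\]
while the bicomodule compatibility $(\lambda\otimes I_H)\rho=(I_H\otimes\rho)\lambda$ yields
\[
\rho(b\triangleleft g) = (b\Z\triangleleft g)\otimes b\U, \qquad b\in B,\ g\in H^\ast.
\]
Writing the single element $\rho(a)=\sum_i a_i\otimes h_i$ with the $h_i$ linearly independent in $H$ and choosing $g_i\in H^\ast$ dual to $\{h_i\}$, one sees that $a_i = g_i\triangleright a\in H^\ast\triangleright A$. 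Combining the two displayed identities gives
\[
\rho(f\triangleright a\triangleleft g) = \sum_i (a_i\triangleleft g)\otimes (h_i)_1\,f((h_i)_2),
\]
and since $a_i\triangleleft g\in H^\ast\triangleright A\triangleleft H^\ast = C$ for each $i$, this lies in $C\otimes H$. The symmetric argument, interchanging the roles of $\rho$ and $\lambda$ and using the left-coaction coassociativity, yields $\lambda(C)\subseteq H\otimes C$.

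For (iii), since $\rho:B\to B\otimes H$ is an algebra homomorphism and $\langle C\rangle\otimes H$ is a subalgebra of $B\otimes H$ containing $\rho(C)$, multiplicativity forces $\rho(\langle C\rangle)\subseteq\langle C\rangle\otimes H$; similarly $\lambda(\langle C\rangle)\subseteq H\otimes\langle C\rangle$, so $\langle C\rangle$ is an $H$-subbicomodule algebra containing $A$. For (iv), any $H$-subbicomodule algebra $D\supseteq A$ satisfies $\rho(D)\subseteq D\otimes H$ and $\lambda(D)\subseteq H\otimes D$, and is therefore stable under the induced $H^\ast$-actions $\triangleright$ and $\triangleleft$; hence $C\subseteq D$ and, because $D$ is a subalgebra, $\langle C\rangle\subseteq D$. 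The main obstacle is step (ii): the bicomodule compatibility and the right-coaction coassociativity must be used in tandem, and it is essential to check that the decomposition $\rho(a)=\sum_i a_i\otimes h_i$ already has each $a_i$ lying in $H^\ast\triangleright A$, so that applying $\triangleleft g$ keeps us inside $C$ rather than in some strictly larger subbicomodule.
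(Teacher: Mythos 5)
Your proof is correct, but it reaches the key stability statement by a different route than the paper. The paper's proof fixes $a\in A$, expands the three-fold tensor $(I_H\otimes\rho)\lambda(a)=\sum_{i,j}h_i\otimes a_{i,j}\otimes h_j$ over a basis $\{h_i\}$ of $H$, shows via coassociativity and the compatibility that the finite-dimensional span $V_a$ of the middle legs $a_{i,j}$ is an $H$-subbicomodule, and then identifies $V_a=H^\ast\triangleright a\triangleleft H^\ast$ exactly by evaluating against the dual basis; the conclusion that $C$ is a subbicomodule is then drawn through the language of rational $H^\ast$-modules. You instead bypass rationality entirely: you compute $\rho(f\triangleright a\triangleleft g)$ directly from the two identities $\rho(f\triangleright a)=a^{+0}\otimes a^{+1}{}_1 f(a^{+1}{}_2)$ (right coassociativity) and $\rho(b\triangleleft g)=(b^{+0}\triangleleft g)\otimes b^{+1}$ (bicomodule compatibility), and observe that the legs $a_i$ of $\rho(a)=\sum_i a_i\otimes h_i$ already lie in $H^\ast\triangleright A$, so the result lands in $C\otimes H$; the left side follows symmetrically. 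Your version is more elementary and self-contained, at the cost of not exhibiting the finite-dimensional subbicomodule $V_a=H^\ast\triangleright a\triangleleft H^\ast$ explicitly (which is the extra information the paper's rationality argument records). The remaining steps --- passing from $C$ to the generated subalgebra via multiplicativity of $\rho$ and $\lambda$, and the minimality argument --- coincide with the paper's. One small point worth making explicit in your write-up: when you write $f\triangleright a\triangleleft g$ without parentheses and conclude $a_i\triangleleft g\in C$, you are implicitly using that $(f\triangleright b)\triangleleft g=f\triangleright(b\triangleleft g)$, which the paper verifies as a preliminary step from the compatibility $(\lambda\otimes I_H)\rho=(I_H\otimes\rho)\lambda$; it is true and easy, but it should be stated.
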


\begin{proof}
	Since $B$ is an $H$-bicomodule, then $B$ is a left and right $H^\ast$-module. Moreover, $B$ is an $H^\ast$-bimodule. In fact, let $f,g\in H^\ast$ and $b\in B$, so
	$$
	\begin{array}{rcl}
	(f\triangleright a)\triangleleft g
	& = & [a^{+0}f(a^{+1})]\triangleleft g\\
	& = & g(a^{+0-1})a^{+0-0}f(a^{+1})\\
	& = & g(a^{-1})a^{-0+0}f(a^{-0+1})\\
	& = & g(a^{-1})(f\triangleright a^{-0})\\
	& = & (f\triangleright g(a^{-1})a^{-0})\\
	& = & f\triangleright (a\triangleleft g).
	\end{array}
	$$
	
	Now we will show that $C$ is a left (and right) $H^\ast$-rational submodule  of $B$.

\vu

	Since $C=\sum\limits_{a\in A} H^\ast\triangleright a\triangleleft H^\ast$, we only need to show that $H^\ast\triangleright a\triangleleft H^\ast$ is rational, for each $a\in A$. Let $\{h_i\}$ be a basis of $H$ over ${\Bbbk}$. Then, there exist elements $a_{i,j}\in A$ such that $$(I_H\otimes\rho)\lambda(a) = \sum\limits_{i,j}{h_i\otimes a_{i,j}\otimes h_j}.$$ Set $V_a$ the vector space generated by the $a_{i,j}$'s.

\vu
	
	First, $V_a$ is an $H$-subbicomodule of $B$, i.e., $\rho(V_a)\subseteq V_a\otimes H$ and $\lambda(V_a)\subseteq H\otimes V_a$. Indeed,
	$$
	\begin{array}{rcl}
	\sum\limits_{i,j}{h_i\otimes\rho(a_{i,j})\otimes h_j}
	& = & (I_H\otimes\rho\otimes I_H)(I_H\otimes\rho)\lambda(a)\\
	& = & [I_H\otimes((\rho\otimes I_H)\rho)]\lambda(a)\\
	& = & [I_H\otimes((I_H\otimes\Delta)\rho)]\lambda(a)\\
	& = & (I_H\otimes I_H\otimes\Delta)(I_H\otimes\rho)\lambda(a)\\
	& = & \sum{h_i\otimes a_{i,j}\otimes \Delta(h_j)} \\
	& = & \sum{h_i\otimes a_{i,j}\otimes \sum\limits_k{h'_{j,k}\otimes h_k}} \\
	& = & \sum\limits_{i,j,k}{h_i\otimes a_{i,j}\otimes h'_{j,k} \otimes h_k}.
	\end{array}
	$$
	Hence, for each $i,j$ we have $\rho(a_{i,j})=\sum\limits_k{a_{i,j}\otimes h'_{j,k}}\in V_a\otimes H$, which implies that $V_a$ is a right $H$-subcomodule.
of $B$. 	
	Considering that $\sum\limits_{i,j}{h_i\otimes a_{i,j}\otimes h_j} = (I_H\otimes\rho)\lambda(a) = (\lambda\otimes I_H)\rho(a)$, a similar computation shows that $V_a$ is also a left $H$-subcomodule of $B$ (thus, an $H$-subbicomodule of $B$).

\vd

It follows from the classical theory that $V_a$ is a right and left rational  $H^\ast$-module. Furthermore, $$a = 1_{H^\ast}\triangleright a\triangleleft 1_{H^\ast} =\varepsilon\triangleright a\triangleleft\varepsilon = \sum\limits_{i,j}{a_{i,j}\varepsilon(h_i)\varepsilon(h_j)},$$ hence $a\in V_a$ and $H^\ast\triangleright a\triangleleft H^\ast\subseteq V_a$.
	Conversely, taking the dual basis $\{h_i^\ast\}$ of $H^\ast$, we have
	$$h_m^\ast\triangleright a \triangleleft h_l^\ast = \sum\limits_{i,j} {h_l^\ast(h_i)\otimes a_{i,j}\otimes h_m^\ast(h_j)} = a_{l,m},$$
	so $a_{l,m}\in H^\ast\triangleright a\triangleleft H^\ast$.
	Therefore, $V_a = H^\ast\triangleright a\triangleleft H^\ast$ and $C$ is a right and left rational $H^\ast$-module. This means that $C$ is a $H$-subbicomodule of $B$.

\vu
	
	Now consider $\bar{A}$ the subalgebra of $B$ generated by $C$. Since $\rho$ and $\lambda$ are algebra homomorphisms, then $\bar{A}$ is an $H$-subbicomodule algebra of $B$ containing $A$.

\vu

It remains to show that $\overline{A}$ is the smallest one. Let $M$ be an $H$-subbicomodule algebra of $B$ containing $A$. Thus, $M$ is an $H^\ast$-subbimodule of $B$ containing $A$, and so, for any $a\in A\subseteq M$ we have $f\triangleright a\triangleleft g \in M$ for all $f,g\in H^\ast$ which implies that $M\supseteq \bar{A}$ because $M$ is an algebra.
\end{proof}

\vd

	Let $A$ be a partial $H$-bicomodule algebra. Put $\mathcal{X} = H \otimes A \otimes H$ and consider the maps
	$$
	\begin{array}{rclcrcl}
	\rho:\mathcal{X}&\to&\mathcal{X}\otimes H&\mathrm{ and }&\lambda:\mathcal{X}&\to& H\otimes\mathcal{X}\\
	x&\mapsto& (I_H \otimes I_A\otimes\Delta_H)(x)& &x&\mapsto& (\Delta_H\otimes I_A\otimes I_H)(x).
	\end{array}
	$$

	Clearly, $\mathcal{X}$ is a right and left $H$-comodule algebra via $\rho$ and $\lambda$ respectively, and,  for any $x=h\otimes a\otimes k\in \mathcal{X}$, we have,
	$$
	\begin{array}{rcl}
	(I_H\otimes\rho)\lambda(x)
	& = & (I_H\otimes\rho)(\Delta(h)\otimes a\otimes k)\\
	& = & h_1\otimes\rho(h_2\otimes a\otimes k)\\
	& = & h_1\otimes h_2\otimes a\otimes \Delta(k)\\
	& = & h_1\otimes h_2\otimes a\otimes k_1\otimes k_2\\
	& = & \Delta(h) \otimes a\otimes k_1\otimes k_2\\
	& = & \lambda(h \otimes a\otimes k_1)\otimes k_2\\
	& = & (\lambda\otimes I_H)(h \otimes a\otimes k_1\otimes k_2)\\
	& = & (\lambda\otimes I_H)(h \otimes a\otimes \Delta(k))\\
	& = & (\lambda\otimes I_H)\rho(x).
	\end{array}
	$$
Hence, $(\lambda,\mathcal{X}, \rho)$ is an $H$-bicomodule algebra.

\vu

Furthermore, if the partial $H$-bicomodule algebra structure of $A$ is given by $\rhop$ and $\ldap$, then it is straightforward to check that the map $\theta:A \to \mathcal{X}$ defined by $\theta=(I \otimes  \rhop)\ldap = (\ldap \otimes I) \rhop$ is a monomorphism of algebras.

\vu

As well, it is also straightforward to check that  $(\lambda( \theta(a)) \otimes 1_H)(1_H \otimes \rho(\theta(b))) = (I_H \otimes \theta \otimes I_H )((\ldap( a) \otimes 1_H)(1_H \otimes \rhop(b)))$. Therefore, taking $B$ the subalgebra of $\mathcal{X}$ generated by $\theta(A)$ we have proved the following theorem.

\begin{teo}
	Every partial $H$-bicomodule algebra has a globalization.
\end{teo}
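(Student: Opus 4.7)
The plan is essentially to carry through the construction outlined in the paragraphs immediately preceding the theorem, verifying that it satisfies the two conditions in the definition of globalization of a partial $H$-bicomodule algebra. Let $(\bar\lambda, A, \bar\rho)$ be the given partial $H$-bicomodule algebra, form $\mathcal{X} = H\otimes A\otimes H$ with the global coactions $\rho = I_H\otimes I_A\otimes \Delta$ and $\lambda = \Delta\otimes I_A\otimes I_H$, and take
\[
\theta(a) = (I_H\otimes\bar\rho)\bar\lambda(a) = (\bar\lambda\otimes I_H)\bar\rho(a).
\]
The equality on the right is exactly the compatibility axiom defining the partial $H$-bicomodule structure on $A$, so $\theta$ is well-defined.

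First I would check that $(\lambda,\mathcal{X},\rho)$ is a (global) $H$-bicomodule algebra: the axioms \ref{rca-1}--\ref{rca-3} and the compatibility $(I_H\otimes\rho)\lambda = (\lambda\otimes I_H)\rho$ all reduce to the counit and coassociativity laws of $\Delta$ applied to the left or right tensor factor; multiplicativity of $\rho$ and $\lambda$ is immediate from the coordinatewise multiplication in $H\otimes A\otimes H$. Next I would verify that $\theta$ is an algebra monomorphism: multiplicativity follows because $\bar\lambda$ and $\bar\rho$ are algebra maps (axioms \ref{lpca-2} and \ref{rpca-1}), and injectivity follows by applying $\varepsilon\otimes I_A\otimes \varepsilon$ and using the counit axioms \ref{lpca-1}, \ref{rpca-2}, which return $a$.

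Third, I would establish the compatibility condition
\[
\bigl(\lambda(\theta(a))\otimes 1_H\bigr)\bigl(1_H\otimes\rho(\theta(b))\bigr) = (I_H\otimes\theta\otimes I_H)\bigl((\bar\lambda(a)\otimes 1_H)(1_H\otimes\bar\rho(b))\bigr).
\]
This is the key computation. Unfolding the left-hand side using $\lambda = \Delta\otimes I_A\otimes I_H$ and $\rho = I_H\otimes I_A\otimes \Delta$ on $\theta(a)$ and $\theta(b)$, the two copies of $\Delta$ expand into four $H$-tensor slots; repeated use of coassociativity of $\Delta$ lets one reassemble these four slots so that the middle tensor factor in $A$ becomes precisely $\theta$ applied to $a^{-0}b^{+0}$, while the four outer $H$-factors reproduce $a^{-1}$, $\Delta$-split tails, and $b^{+1}$ in the correct order.

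Finally, by Lemma \ref{H-subbicomodulogerado} applied to the subalgebra $\theta(A)\subseteq\mathcal{X}$, there is a smallest $H$-subbicomodule algebra $B$ of $\mathcal{X}$ containing $\theta(A)$; this $B$ is an $H$-bicomodule algebra and satisfies condition (1) by construction. Thus $((\lambda,B,\rho),\theta)$ is a globalization. The main obstacle I anticipate is the compatibility identity in the previous paragraph: it is a Sweedler-style bookkeeping check in $H^{\otimes 2}\otimes A\otimes H^{\otimes 2}$ where one must align four coproduct expansions with the iterated applications of $\bar\lambda$ and $\bar\rho$ defining $\theta$, and the correct reassembly depends sensitively on invoking the partial bicomodule compatibility at the right moment.
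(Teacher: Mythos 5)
Your proposal is correct and follows essentially the same route as the paper: the same $\mathcal{X}=H\otimes A\otimes H$ with coactions $I_H\otimes I_A\otimes\Delta$ and $\Delta\otimes I_A\otimes I_H$, the same embedding $\theta=(I\otimes\bar\rho)\bar\lambda=(\bar\lambda\otimes I)\bar\rho$, the same compatibility check, and Lemma~\ref{H-subbicomodulogerado} to produce the smallest $H$-subbicomodule algebra containing $\theta(A)$. If anything you are slightly more explicit than the paper in invoking that lemma (the paper just says ``the subalgebra generated by $\theta(A)$''), and your outline of the remaining Sweedler bookkeeping matches what the paper leaves as ``straightforward to check.''
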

We will refer to the globalization $B$, above constructed, as the \emph{standard globalization} of a given partial $H$-bicomodule algebra $A$.

\vu

%%%%%%%%%%%%%%%%%%%%%%%%%%%%%%%%%%%%%%%%%%%%%%%%%%%%%%%%%%%%%%%%%%%%%%%%%%%%%%%%%%%%%%%%%%%%
%%%%%%%%%%%%%%%%%%%%%%%%%%%%%%%%%%%%%%%%%%%%%%%%%%%%%%%%%%%%%%%%%%%%%%%%%%%%%%%%%%%%%%%%%%%%
\subsection{A particular case of globalization}${}$
%%%%%%%%%%%%%%%%%%%%%%%%%%%%%%%%%%%%%%%%%%%%%%%%%%%%%%%%%%%%%%%%%%%%%%%%%%%%%%%%%%%%%%%%%%%%

\vd

We will suppose in this subsection that $H^0$ separates points. Under this assumption, we will show that the globalization of a partial $H$-bicomodule algebra and the globalization of the corresponding partial $H^0$-bimodule algebra are isomorphic.

\vu

From Proposition \ref{Hbcma=>H0bma}, since ${\mathcal X} = H \otimes A \otimes H$ is an $H$-bicomodule algebra via $I_H\otimes I_A \otimes \Delta_H$ and $\Delta_H \otimes I_A \otimes I_H$, it follows that
 ${\mathcal X}$ is an $H^0$-bimodule algebra by the induced actions
$$
\begin{array}{rcl}
f\triangleright(h\otimes a\otimes k) & = & (h\otimes a\otimes k)^{+0}f((h\otimes a\otimes k)^{+1})\\
& = & (h\otimes a\otimes k_1f(k_2))\\
&&\\
(h\otimes a\otimes k)\triangleleft f & = & f((h\otimes a\otimes k)^{-1})(h\otimes a\otimes k)^{-0}\\
& = & f(h_1)h_2\otimes a\otimes k.
\end{array}
$$

 On the other hand, since $A$ is a partial $H$-bicomodule algebra then $A$ is a partial $H^0$-bimodule algebra by the induced actions. As already shown, $Hom(H^0\otimes H^0, A)$ is an $H^0$-bimodule algebra via
$$
\begin{array}{rl}
f\triangleright \alpha:H^0\otimes H^0&\to A\\
g\otimes k&\mapsto \alpha(g\ast f\otimes k)\\
&\\
\alpha \triangleleft f:H^0\otimes H^0&\to A\\
g\otimes k&\mapsto \alpha(g\otimes f\ast k)
\end{array}
$$
and we have an algebra monomorphism
$$
\begin{array}{rcl}
\varphi:A &\to& Hom(H^0\otimes H^0, A)\\
a & \mapsto & \varphi(a):f\otimes g\mapsto f\Ape a \Apd g.
\end{array}
$$

Now, we define the linear map
$$
\begin{array}{rl}
\Psi:H\otimes A\otimes H&\to Hom(H^0\otimes H^0, A)\\
h\otimes a\otimes k&\mapsto \Psi(h\otimes a\otimes k):f\otimes g\mapsto g(h)af(k).
\end{array}$$

\begin{prop}
	With the above notations, the map $\Psi$ is a monomorphism of $H^0$-bimodule algebras. Moreover, $\Psi\theta = \varphi$.
\end{prop}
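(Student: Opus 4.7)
The plan is to verify, in turn, that $\Psi$ preserves the multiplication and the $H^0$-bimodule structures, is injective, and finally satisfies $\Psi\theta=\varphi$ via the partial bicomodule compatibility of $A$.

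\textbf{Algebra homomorphism and bimodule compatibility.} For $x=h\otimes a\otimes k$ and $y=h'\otimes a'\otimes k'$ in $\mathcal{X}$, one computes directly $\Psi(xy)(f\otimes g)=g(hh')\,aa'\,f(kk')$, while $(\Psi(x)\ast\Psi(y))(f\otimes g)=g_1(h)\,a\,f_1(k)\,g_2(h')\,a'\,f_2(k')$; the two agree because the $f_i(\cdot),g_i(\cdot)$ are scalars and $f_1(k)f_2(k')=f(kk')$, $g_1(h)g_2(h')=g(hh')$, by the very definition of $\Delta_{H^0}$. Preservation of the unit is the same identity evaluated at $1_H$. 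For the bimodule compatibility the crucial bookkeeping is that on $\mathcal{X}$ the left $H^0$-action is induced from the \emph{right} $H$-coaction (so $p\triangleright(h\otimes a\otimes k)=p(k_2)\,(h\otimes a\otimes k_1)$) while the right $H^0$-action comes from the left $H$-coaction; a direct calculation then gives
\[ \Psi(p\triangleright x)(f\otimes g)=g(h)\,a\,(f\ast p)(k)=\Psi(x)(f\ast p\otimes g)=(p\triangleright\Psi(x))(f\otimes g), \]
and the right-action case is symmetric.

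\textbf{Injectivity.} Write $y=\sum_i h_i\otimes z_i\in\mathcal{X}$ with the $h_i$ linearly independent in $H$ and $z_i\in A\otimes H$, and suppose $\Psi(y)=0$. Then $\sum_i g(h_i)(I_A\otimes f)(z_i)=0$ for every $f,g\in H^0$. Since $H^0$ separates points, one may pick $g_j\in H^0$ with $g_j(h_i)=\delta_{ij}$, so $(I_A\otimes f)(z_j)=0$ for every $f\in H^0$; writing $z_j=\sum_l a_l\otimes k_l$ with the $k_l$ linearly independent and applying the separation hypothesis once more on the third tensor factor yields $a_l=0$, hence $y=0$.

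\textbf{The identity $\Psi\theta=\varphi$.} By definition $\theta(a)=(I\otimes\rhop)\ldap(a)=a\Upm\otimes (a\Zpm)\Zp\otimes (a\Zpm)\Up$, so
\[ \Psi(\theta(a))(f\otimes g)=g(a\Upm)\,(a\Zpm)\Zp\, f((a\Zpm)\Up). \]
On the other hand, unravelling $f\Ape a=a\Zp f(a\Up)$ and $b\Apd g=g(b\Upm)b\Zpm$ gives
\[ \varphi(a)(f\otimes g)=f\Ape a\Apd g=g((a\Zp)\Upm)\,(a\Zp)\Zpm\,f(a\Up), \]
and the two sides coincide after applying $g\otimes I_A\otimes f$ to the partial bicomodule identity $(\ldap\otimes I)\rhop(a)=(I\otimes\rhop)\ldap(a)$. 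The principal obstacle is orientation bookkeeping: keeping straight which $H^0$-action on $\mathcal{X}$ is induced from which $H$-coaction in the bimodule check, and aligning the Sweedler indices of $\theta(a)$ with those produced by the iterated partial (co)action in $\varphi(a)$.
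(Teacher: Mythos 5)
Your proposal is correct and follows essentially the same route as the paper: the same direct evaluations for multiplicativity, unitality and the two $H^0$-module compatibilities, with $\Psi\theta=\varphi$ reduced to the bicomodule compatibility $(\ldap\otimes I)\rhop=(I\otimes\rhop)\ldap$ (the paper simply evaluates via the other composite defining $\theta$, which amounts to the same thing). The only difference is that you spell out the injectivity argument from the separation of points, which the paper leaves as a one-line remark.
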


\begin{proof}
	The injectivity of $\Psi$ easily follows from the assumption that $H^0$ separates points.

\vu
	
	Let $x=h\otimes a\otimes k$ and $y=h'\otimes a'\otimes k'$ in $\mathcal{X}$, and $f,g \in H^0$. Then,
	$$
	\begin{array}{rcl}
	(\Psi(x)\ast\Psi(y))(f\otimes g)& = & \Psi(x)(f_1\otimes g_1)\Psi(y)(f_2\otimes g_2)\\
	& = & (g_1(h) a f_1(k))(g_2(h') a' f_2(k'))\\
	& = & g_1(h)g_2(h') a a' f_1(k) f_2(k')\\
	& = & g(h h') a a' f(k k')\\
	& = & \Psi(h h'\otimes a a'\otimes k k')(f\otimes g)\\
	& = & \Psi(xy)(f\otimes g)
	\end{array}
	$$
	and
	$$
	\begin{array}{rcl}
	\Psi(1_{H\otimes A\otimes H})(f\otimes g)& = &g(1_H) 1_A f(1_H)\\
	& = & 1_{A} \varepsilon_{H^0}(f) \varepsilon_{H^0} (g)\\
	& = & 1_{A} \varepsilon_{H^0\otimes H^0}(f\otimes g)\\
	& = & 1_{Hom(H^0\otimes H^0, A)}(f\otimes g)
	\end{array}
	$$
	Thus, $\Psi$ is an algebra monomorphism.

\vu
	
	It remains to show that $\Psi$ is an $H^0$-bimodule map. Indeed, for  $a\in A$, $f,g,g'\in H^0$ and $h,k\in H$, we have
	$$
	\begin{array}{rcl}
	[f\triangleright\Psi(h\otimes a\otimes k)](g\otimes g')& = &\Psi(h\otimes a\otimes k)(g\ast f\otimes g')\\
	& = &g'(h)\ a\ (g\ast f)(k)\\
	& = &g'(h)\ a\ g(k_1)f(k_2)\\
	& = &g'(h)\ a\ g(k_1f(k_2))\\
	& = &\Psi[h\otimes a\otimes k_1f(k_2)](g\otimes g')\\
	& = &\Psi[f\triangleright(h\otimes a\otimes k)](g\otimes g')\\
	\end{array}
	$$
	and
	$$
	\begin{array}{rcl}
	{}[\Psi(h\otimes a\otimes k)\triangleleft f](g\otimes g')& = &\Psi(h\otimes a\otimes k)(g\otimes f\ast g')\\
	& = &(f\ast g')(h)\ a\ g(k)\\
	& = &f(h_1) g'(h_2)\ a\ g(k)\\
	& = &g'(f(h_1) h_2)\ a\ g(k)\\
	& = &\Psi[f(h_1) h_2\otimes a\otimes k](g\otimes g')\\
	& = &\Psi[(h\otimes a\otimes k)\triangleleft f](g\otimes g').
	\end{array}
	$$
	
	Therefore, $\Psi$ is an $H^0$-bimodule map.
	
\vu

	Finally, for any $a\in A$,
	$$
	\begin{array}{rcl}
	\Psi(\theta(a))(f\otimes g)& = & \Psi[(\bar\lambda\otimes I_H)\bar\rho(a)](f\otimes g)\\
	& = & \Psi[a^{+0-1}\otimes a^{+0-0}\otimes a^{+1}](f\otimes g)\\
	& = & g(a^{+0-1})\ a^{+0-0}\ f(a^{+1})\\
	& = & (a^{+0}\Apd g)f(a^{+1})\\
	& = & (a^{+0}f(a^{+1}))\Apd g\\
	& = & f\Ape a\Apd g\\
	& = & \varphi(a)(f\otimes g),
	\end{array}
	$$ which prove the last assertion.
\end{proof}

\vu

	As we have seen above, if  $A$ is a partial $H$-bicomodule algebra, then $A$ is also a partial $H^0$-bimodule algebra and, in this context, we have the following
two standard globalizations:
	\begin{itemize}
		\item[(i)] $(H^0\triangleright\varphi(A)\triangleleft H^0, \varphi)$ is a globalization of $A$ as a partial $H^0$-bimodule algebra;	

\vu

		\item[(ii)] $(H^0\triangleright\theta(A)\triangleleft H^0, \theta)$ is a globalization of $A$ as a partial $H$-bicomodule algebra.
	\end{itemize}

\vu

Since $\Psi \theta = \varphi$ and $\Psi$ is a monomorphism of $H^0$-bimodule algebra, it easily follows that

\vu

\begin{prop}
$$\Psi|_{H^0\triangleright\theta(A)\triangleleft H^0}:H^0\triangleright\theta(A)\triangleleft H^0\to H^0\triangleright\varphi(A)\triangleleft H^0$$
is an isomorphism of $H^0$-bimodule algebras.
\end{prop}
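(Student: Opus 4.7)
The plan is to exploit the preceding proposition, which has already carried the substantive work: $\Psi$ is established as an injective $H^0$-bimodule algebra homomorphism satisfying $\Psi\theta = \varphi$. The statement will then follow by restricting $\Psi$ and identifying the image on the nose, with essentially no new computation required.

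I would first observe that the injectivity of $\Psi|_{H^0\triangleright\theta(A)\triangleleft H^0}$ is immediate from the injectivity of $\Psi$ on all of $\mathcal{X}=H\otimes A\otimes H$, and that the restriction automatically inherits the structure of an $H^0$-bimodule algebra homomorphism, since these are pointwise properties closed under restriction to any subspace that is both an $H^0$-subbimodule and a subalgebra.

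The central verification is that the image is precisely $H^0\triangleright\varphi(A)\triangleleft H^0$. Because $\Psi$ is an $H^0$-bimodule map and $\Psi\theta=\varphi$, for all $f,g\in H^0$ and $a\in A$ one has
$$\Psi(f\triangleright\theta(a)\triangleleft g) = f\triangleright\Psi(\theta(a))\triangleleft g = f\triangleright\varphi(a)\triangleleft g.$$
Extending linearly, $\Psi$ carries the spanning set of $H^0\triangleright\theta(A)\triangleleft H^0$ onto that of $H^0\triangleright\varphi(A)\triangleleft H^0$, which gives surjectivity. Combined with the multiplicativity of $\Psi$, this also ensures well-definedness if one realizes the globalization on the $\theta$-side as the subalgebra generated by $H^0\triangleright\theta(A)\triangleleft H^0$ in the sense of Lemma \ref{H-subbicomodulogerado}, since the target side is already closed under multiplication by the analogue of Lemma \ref{lemaco}(i) transferred to $H^0$.

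There is essentially no serious obstacle: the real work was done in constructing $\Psi$ and verifying that it is an injective $H^0$-bimodule algebra map with $\Psi\theta=\varphi$. The present statement just packages that information, restricting to the $H^0$-subbimodules generated by $\theta(A)$ and $\varphi(A)$ respectively. The only point requiring mild care is making explicit which version of ``generated by $H^0\triangleright(\cdot)\triangleleft H^0$'' is intended on each side; once this is fixed, the argument is purely formal.
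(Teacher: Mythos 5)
Your proposal is correct and follows exactly the route the paper intends: the paper gives no written proof, stating only that the result ``easily follows'' from $\Psi\theta=\varphi$ together with $\Psi$ being a monomorphism of $H^0$-bimodule algebras, and your argument is precisely the spelling-out of that deduction (injectivity by restriction, surjectivity from $\Psi(f\triangleright\theta(a)\triangleleft g)=f\triangleright\varphi(a)\triangleleft g$). No gaps.
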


\vu

%%%%%%%%%%%%%%%%%%%%%%%%%%%%%%%%%%%%%%%%%%%%%%%%%%%%%%%%%%%%%%%%%%%%%%%%%%%%%%%%%%%%%%%%%%%%
\section{Partial left-right smash products}${}$
%%%%%%%%%%%%%%%%%%%%%%%%%%%%%%%%%%%%%%%%%%%%%%%%%%%%%%%%%%%%%%%%%%%%%%%%%%%%%%%%%%%%%%%%%%%%
\begin{dfn}
	Let $A$ be a partial $H$-bimodule algebra and $\bar{A}$ a partial $H$-bicomodule algebra. The \emph{$(L,R)$-smash product} of $A$ and $\bar{A}$, denoted by $A \natural \bar{A}$,
is defined as the tensor product $A \otimes \bar{A}$ with the multiplication given by $$(a \natural u)(b \natural v) = (a \Apd v\Up)(u\Upm \Ape b) \natural u\Zpm v\Zp,$$ where $a,b \in A$ and $u,v \in \bar{A}$ .
\end{dfn}

\begin{prop}
	Let $(\Ape, A, \Apd)$ be a partial $H$-bimodule algebra and $(\lambda, \bar{A}, \rho)$ a partial $H$-bicomodule algebra.
Then the $(L, R)$-smash product $A\natural\bar{A}$ is an associative algebra.
\end{prop}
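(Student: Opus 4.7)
The plan is to verify associativity by direct expansion: compute both
\[
L := \bigl((a\natural u)(b\natural v)\bigr)(c\natural w)\qquad\text{and}\qquad R := (a\natural u)\bigl((b\natural v)(c\natural w)\bigr)
\]
using the defining product, and reduce each to a common normal form. The proof separates cleanly into an $A$-component calculation and an $\bar A$-component calculation, coupled at a few points by the mixed bi(co)module compatibilities.

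To compute $L$, I first evaluate the inner product to obtain $A$-factor $X=(a\Apd v\Up)(u\Upm\Ape b)$ and $\bar A$-factor $Y=u\Zpm v\Zp$. Multiplying by $(c\natural w)$ then yields $\bar A$-factor $Y\Zpm w\Zp$ and $A$-factor $(X\Apd w\Up)(Y\Upm\Ape c)$. Multiplicativity of $\lambda$ (axiom LPCA-2) gives $\lambda(Y)=\lambda(u\Zpm)\lambda(v\Zp)$, and LPCA-3 applied to $u$ expresses $u\Upm\otimes\lambda(u\Zpm)$ in terms of $(\Delta\otimes I_{\bar A})\lambda(u)$ together with a factor $\lambda(1_{\bar A})$. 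The $A$-component is simplified by applying RPMA-2 to $X\Apd w\Up$ and then the bimodule compatibility $(h\Ape a)\Apd g=h\Ape(a\Apd g)$ to push $w\Up{}_2$ past the $u\Upm$-action, yielding
\[
(a\Apd v\Up w\Up{}_1)\bigl(u\Upm\Ape(b\Apd w\Up{}_2)\bigr)(Y\Upm\Ape c).
\]

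To compute $R$ I proceed symmetrically: the inner product gives $A$-factor $(b\Apd w\Up)(v\Upm\Ape c)$ and $\bar A$-factor $v\Zpm w\Zp$, and multiplying on the left by $(a\natural u)$ requires $\rho(v\Zpm w\Zp)=\rho(v\Zpm)\rho(w\Zp)$ by multiplicativity of $\rho$ (axiom RPCA-1), with RPCA-3 applied to $w$ handling $\rho(w\Zp)$ up to a factor $\rho(1_{\bar A})$. The $A$-component is rearranged using LPMA-2 together with the bimodule compatibility. Finally, the bicomodule compatibility $(I_H\otimes\rho)\lambda=(\lambda\otimes I_H)\rho$ applied to $v$ interchanges $v\Zp{}\Upm\otimes v\Zp{}\Zpm\otimes v\Up$ with $v\Upm\otimes v\Zpm{}\Zp\otimes v\Zpm{}\Up$, which is precisely what identifies the expressions for $L$ and $R$.

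The main obstacle is pure combinatorial bookkeeping: the partial coassociativities LPCA-3 and RPCA-3 each contribute an extra $\lambda(1_{\bar A})$ or $\rho(1_{\bar A})$ factor in the expansion, and one must verify these line up on the two sides. The structural content is that the bimodule and bicomodule compatibilities are exactly what is needed for these extra factors to match --- in particular, no symmetry hypothesis on the partial actions or coactions is required. Unitality of $1_A\natural 1_{\bar A}$ is immediate from the axioms $1_H\Ape a=a$, $a\Apd 1_H=a$ and $\rho(1_{\bar A})=1_{\bar A}\otimes 1_H$, $\lambda(1_{\bar A})=1_H\otimes 1_{\bar A}$ (the latter following from multiplicativity and counitality of the coactions).
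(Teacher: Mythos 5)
Your associativity argument follows essentially the same route as the paper's: direct expansion of both bracketings, multiplicativity of $\lambda$ and $\rho$ on the products $u\Zpm v\Zp$ and $v\Zpm w\Zp$, the partial coassociativity axioms LPCA-3 and RPCA-3 contributing the correction factors $\lambda(1_{\bar A})$ and $\rho(1_{\bar A})$, the unital reformulations of LPMA-2/RPMA-2, the bimodule compatibility, and finally the bicomodule compatibility applied to the middle element to identify the two sides. You are also right that no symmetry hypothesis is needed. The only difference is cosmetic: the paper rewrites the left bracketing all the way into the right one in a single chain rather than reducing both to a common normal form.

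However, your closing claim is wrong, and the paper explicitly refutes it immediately after this proposition. You assert that $1_A\natural 1_{\bar A}$ is a unit because $\rho(1_{\bar A})=1_{\bar A}\otimes 1_H$ and $\lambda(1_{\bar A})=1_H\otimes 1_{\bar A}$ ``follow from multiplicativity and counitality of the coactions.'' That derivation (the paper's Proposition 2.12) uses the \emph{global} coassociativity axiom $(\rho\otimes I_H)\rho=(I_A\otimes\Delta)\rho$, which is precisely what a partial coaction lacks; indeed $\rho(1_{\bar A})=1_{\bar A}\otimes 1_H$ is equivalent to the coaction being global. For genuinely partial structures $1_A\natural 1_{\bar A}$ is in general not a unit and not even idempotent --- the paper exhibits a Sweedler-algebra example with $(1_\Bbbk\natural 1_\Bbbk)^2=0$. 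This does not affect the statement actually being proved (algebras here need not be unital), but you should delete that final claim.
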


\begin{proof} Recall the notations  $$\lambda(u) = u\Zpm \otimes u\Upm \quad \text{and} \quad \rho(u) = u\Zp \otimes u\Up,$$ for all $u\in \bar{A}$.

\vu

Then, for all $a, b, c\in A$ and $u, v, w\in\bar{A}$ we have
	$$
	\begin{array}{rcl}
	\multicolumn{3}{l}{[(a\natural u)(b\natural v)](c\natural w) =} \\
	& = &  [(a\Apd v{\Up})(u{\Upm}\Ape b)\natural u{\Zpm}v{\Zp}](c\natural w)\\
	& = & [(a\Apd v{\Up})(u{\Upm}\Ape b)\Apd w{\Up}][u{\Zpm \Upm}v{\Zp \Upm}\Ape c]\natural u{\Zpm \Zpm}v{\Zp \Zpm}w{\Zp}\\
	& = & (a\Apd v{\Up}\Apd w{\Up_1})(u{\Upm}\Ape b\Apd w{\Upm_2})[u{\Zpm \Upm}v{\Zp \Upm}\Ape c]\natural u{\Zpm \Zpm}v{\Zp \Zpm}w{\Zp}\\
	& = & (a\Apd v{\Up}\Apd w{\Up_1})(u{\Upm_1}\Ape b\Apd w{\Upm_2})[u\Upm_2 {1_A} {\Upm}v{\Zp \Upm}\Ape c]\natural u{\Zpm}{1_A}{\Zpm}v{\Zp \Zpm}w{\Zp}\\
	& = & (a\Apd v{\Up}\Apd w{\Up_1})(u{\Upm_1}\Ape b\Apd w{\Upm_2})[u{\Upm_2}v{\Zp \Upm}\Ape c]\natural u{\Zpm}v{\Zp \Zpm}w{\Zp}\\
	& = & (a\Apd v{\Up} w{\Up_1})(1_A\Apd w{\Upm_2})(u{\Upm_1}\Ape b\Apd w{\Up_3})[u{\Upm_2}v{\Zp \Upm}\Ape c]\natural u{\Zpm}v{\Zp \Zpm}w{\Zp}\\
	& = & (a\Apd v{\Up} w{\Up_1})(u{\Upm_1}\Ape b\Apd w{\Upm_2})[u{\Upm_2}v{\Zp \Upm}\Ape c]\natural u{\Zpm}v{\Zp \Zpm}w{\Zp}\\
	& = &  (a \Apd v{\Up} w{\Up_1})(u{\Upm_1} \Ape b \Apd w{\Upm_2})(u{\Upm_2} v{\Up \Upm} \Ape c) \natural u{\Zpm} v{\Zp \Zpm} w{\Zp} \\
	& = &  (a \Apd v{\Zpm \Up} w{\Up_1})(u{\Upm_1} \Ape b \Apd w{\Upm_2})(u{\Upm_2} v{\Upm} \Ape c) \natural u{\Zpm} v{\Zpm \Zp} w{\Zp} \\
	& = &  (a \Apd v{\Zpm \Up} w{\Up_1})(u{\Upm_1} \Ape b \Apd w{\Upm_2})(u{\Upm_2} \Ape 1_A)(u{\Upm_3} v{\Upm} \Ape c) \natural u{\Zpm} v{\Zpm \Zp} w{\Zp} \\
	& = &  (a \Apd v{\Zpm \Up} w{\Up_1})(u{\Upm_1} \Ape b \Apd w{\Upm_2})(u{\Upm_2} \Ape v{\Upm} \Ape c) \natural u{\Zpm} v{\Zpm \Zp} w{\Zp} \\
	& = &  (a \Apd v{\Zpm \Up} {1_A}{\Up} w{\Up_1})(u{\Upm_1} \Ape b \Apd w{\Upm_2})(u{\Upm_2} \Ape v{\Upm} \Ape c) \natural u{\Zpm} v{\Zpm \Zp} {1_A}{\Zp} w{\Zp} \\
	& = &  (a \Apd v{\Zpm \Up} w{\Zp \Up})(u{\Upm_1} \Ape b \Apd w{\Up})(u{\Upm_2} \Ape v{\Upm} \Ape c) \natural u{\Zpm} v{\Zpm \Zp} w{\Zp \Zp} \\
	& = &  (a \Apd v{\Zpm \Up} w{\Zp \Up})(u{\Upm} \Ape [(b \Apd w{\Up})(v{\Upm} \Ape c)]) \natural u{\Zpm} v{\Zpm \Zp} w{\Zp \Zp} \\
	& = &  (a \natural u)[(b \Apd w{\Up})(v{\Upm} \Ape c) \natural v{\Zpm} w{\Zp}]\\
	& = &  (a \natural u)[(b \natural v)(c \natural w)],
	\end{array}
	$$ and the assertion follows.
\end{proof}

The element $1_A \natural 1_{\bar{A}}$ is not an identity for $A\natural\bar{A}$ and, in general, it is not even an idempotent.

\vu

\begin{ex}
	Let $H = \mathbb{H}_4 = \left\langle 1,g,x,xg\ |\ g^2 = 1, \  x^2 = 0,\ gx=-xg\right\rangle$ be the four dimensional Sweedler's
algebra and $A = \bar{A} = {\Bbbk} $ the ground field.

\vu
	
	Let $\Ape$ be a left partial action of ${H}$ on $\Bbbk$ defined by $$1 \Ape 1_{\Bbbk} = 1_{\Bbbk},\quad g \Ape 1_{\Bbbk} = 0, \quad\text{and}\quad x \Ape 1_{\Bbbk} = r = - xg \Ape 1_{\Bbbk},$$
with $r \in \Bbbk $ . Consider also the right partial action $\Apd$ defined by $$1_{\Bbbk} \Apd 1 = 1_{\Bbbk},\quad 1_{\Bbbk}  \Apd g= 0,\quad\text{and}\quad
1_{\Bbbk}  \Apd x = s = - 1_{\Bbbk}  \Apd xg,$$ with $s\in \Bbbk$.

\vu
	
	Moreover, define a left (resp., right) partial coaction of $H$ in ${\Bbbk}$ by $$\lambda (1_{\Bbbk}) = (\frac{1}{2} + \frac{1}{2}g + txg) \otimes 1_{\Bbbk}\quad
(\text{resp.,}\quad \rho(1_{\Bbbk}) = 1_{\Bbbk} \otimes (\frac{1}{2} + \frac{1}{2}g + ux)),$$  with $t, u \in {\Bbbk}$.

\vu

	Then,
	$$
	\begin{array}{rl}
	(1_{\Bbbk} \natural 1_{\Bbbk})^2 &= (1 \Apd (\frac{1}{2} + \frac{1}{2}g + ux))( (\frac{1}{2} + \frac{1}{2}g + txg) \Ape 1_{\Bbbk}) \natural 1_{\Bbbk} \\
	&= (\frac{1}{2} + ts)(\frac{1}{2} - ur) \natural 1_{\Bbbk}
	\end{array}
	$$
	
	And, choosing $ts-\frac{1}{2}$ or $ur=\frac{1}{2}$ we obtain $(1_{\Bbbk} \natural 1_{\Bbbk})^2 = 0$.
\end{ex}

\vu

We end this section discussing conditions under which one can obtain  a unital algebra from  $A\natural\bar{A}$. Indeed, it is enough to look for conditions in
order to ensure the existence of a nonzero idempotent in $A\natural\bar{A}$.  We start by the following proposition (here, $Ker(\varepsilon)$ denotes the kernel of $\varepsilon$).

\begin{prop}\label{prop4}
	\label{pp1}
	Let $(\Ape, A)$ be a left partial $H$-module algebra and a nonzero element $a \in A$. Then,  $Ker(\varepsilon) \Ape a = 0$ if and only if $h \Ape a = \varepsilon(h) a$, for all $h \in H$.
\end{prop}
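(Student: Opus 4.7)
The plan is to prove both implications directly, using the decomposition of $H$ as $\Bbbk\cdot 1_H \oplus \operatorname{Ker}(\varepsilon)$.

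For the $(\Leftarrow)$ direction, assume $h \Ape a = \varepsilon(h) a$ for every $h \in H$. Then for any $h \in \operatorname{Ker}(\varepsilon)$ we immediately get $h \Ape a = \varepsilon(h) a = 0$, so $\operatorname{Ker}(\varepsilon) \Ape a = 0$. This direction does not even use the nonzero hypothesis on $a$.

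For the $(\Rightarrow)$ direction, assume $\operatorname{Ker}(\varepsilon) \Ape a = 0$. Given any $h \in H$, write
\[
h = \bigl(h - \varepsilon(h) 1_H\bigr) + \varepsilon(h) 1_H.
\]
Applying $\varepsilon$ to the first summand gives $\varepsilon(h) - \varepsilon(h)\varepsilon(1_H) = 0$, so $h - \varepsilon(h) 1_H \in \operatorname{Ker}(\varepsilon)$. By linearity of $\Ape$,
\[
h \Ape a = \bigl(h - \varepsilon(h) 1_H\bigr) \Ape a + \varepsilon(h)\,(1_H \Ape a) = 0 + \varepsilon(h) a = \varepsilon(h) a,
\]
where the first term vanishes by hypothesis and the second uses axiom \ref{lpma-1} of Definition \ref{lpma}.

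There is no real obstacle here; the argument is a one-line splitting trick. The only subtlety worth flagging is that the conclusion of the forward direction is a pointwise identity at the fixed element $a$, not a statement about $\Ape$ being globally trivial, so no deeper structural fact about partial actions is required. The nonzero hypothesis on $a$ plays no role in either direction and is presumably retained only because the statement is meant to be applied to produce a nonzero idempotent in $A\natural\bar A$ in the sequel.
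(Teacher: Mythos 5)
Your proof is correct and follows essentially the same route as the paper: both use the decomposition $H=\operatorname{Ker}(\varepsilon)\oplus\Bbbk 1_H$ (your explicit splitting $h=(h-\varepsilon(h)1_H)+\varepsilon(h)1_H$ is just the paper's $h=h_\varepsilon+\alpha 1_H$ written out), together with linearity and the axiom $1_H\Ape a=a$. Your side remark that the nonzero hypothesis on $a$ is never used is also accurate.
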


\begin{proof}
	Note that $H = Ker(\varepsilon) \oplus {\Bbbk} 1_H$. Taking $h \in H$, $h = h_{\varepsilon} + \alpha 1_H$, with $h_{\varepsilon} \in Ker(\varepsilon)$ and $\alpha \in {\Bbbk}$,
	we have that $\varepsilon(h) = \varepsilon(h_{\varepsilon}) + \alpha \varepsilon(1_H) = \alpha$, and hence
$h \Ape a = h_{\varepsilon} \Ape a + \alpha (1_H \Ape a) = \alpha a=\varepsilon(h)a$.
	Conversely, if $h \in Ker(\varepsilon)$, then $h \Ape a = \varepsilon(h) a = 0$.
\end{proof}

If $(\Ape, A, \Apd)$ is a partial $H$-bimodule algebra we can extend the above result to the following statement
$$Ker(\varepsilon) \Ape a = 0 = a \Apd Ker(\varepsilon)\, \ \text{if and only if}\, \  h \Ape a = \varepsilon(h) a = a \Apd h,\,\ \text{for all}\,\ h \in H.$$

\vu

\begin{prop}\label{prop5}
	Let $(\lambda, \bar{A}, \rho)$ be a partial $H$-bicomodule algebra, $(\Ape,A, \Apd)$ a partial $H$-bimodule algebra. Assume that there exist nonzero elements $a \in A$ such that $a^2 =a$ and
$Ker(\varepsilon) \Ape a = 0 = a \Apd Ker(\varepsilon)$, and $u \in \bar{A}$ such that $u^2 = u$. Then, $(a \natural u)^2 = a \natural u$ in $A \natural \bar{A}$.
\end{prop}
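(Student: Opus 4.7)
The plan is a direct computation using the defining formula for the product in $A \natural \bar A$ together with the two idempotence hypotheses and the bimodule-algebra analogue of Proposition \ref{prop4}.

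First I would unfold the product: by definition,
$$(a \natural u)(a \natural u) = (a \Apd u\Up)(u\Upm \Ape a) \natural u\Zpm u\Zp.$$
The key observation is that the hypothesis $Ker(\varepsilon)\Ape a = 0 = a\Apd Ker(\varepsilon)$, combined with the extension of Proposition \ref{prop4} stated just after its proof (for partial $H$-bimodule algebras), gives $h \Ape a = \varepsilon(h) a = a \Apd h$ for every $h \in H$. Applying this to $h = u\Up$ and $h = u\Upm$ yields
$$a \Apd u\Up = \varepsilon(u\Up)\, a \quad \text{and} \quad u\Upm \Ape a = \varepsilon(u\Upm)\, a.$$

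Substituting back and using $a^{2}=a$, the right-hand side becomes
$$\varepsilon(u\Up)\,\varepsilon(u\Upm)\, a \natural u\Zpm u\Zp \;=\; a \natural \bigl(\varepsilon(u\Upm) u\Zpm\bigr)\bigl(\varepsilon(u\Up) u\Zp\bigr).$$
Now the counit axioms for the right and left partial coactions, namely \ref{rpca-2} and \ref{lpca-1}, give $\varepsilon(u\Up) u\Zp = u$ and $\varepsilon(u\Upm) u\Zpm = u$ (summation understood). Hence the expression collapses to $a \natural u\cdot u = a \natural u^{2} = a \natural u$, using $u^{2}=u$ in the last step.

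There is no real obstacle here; the only subtle point is making sure that the replacement $a \Apd u\Up = \varepsilon(u\Up) a$ is applied \emph{inside} the Sweedler sum (i.e., one is really writing $\sum_{i,j} \varepsilon(u\Up_i)\varepsilon(u\Upm_j)\, a\cdot a \natural u\Zpm_j u\Zp_i$), so that the counits can then be pushed through the tensor to reconstitute $u\cdot u$ on the $\bar A$-factor. Once this bookkeeping is observed, the equality $(a\natural u)^{2}=a\natural u$ follows immediately.
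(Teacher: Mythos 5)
Your proof is correct and follows essentially the same route as the paper: expand $(a\natural u)^2$ via the defining formula, use the bimodule extension of Proposition \ref{prop4} to replace $a\Apd u\Up$ and $u\Upm\Ape a$ by $\varepsilon(u\Up)a$ and $\varepsilon(u\Upm)a$ inside the Sweedler sums, then invoke the counit axioms \ref{rpca-2} and \ref{lpca-1} together with $a^2=a$ and $u^2=u$. The paper's proof is just a more condensed version of this same computation.
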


\begin{proof}
	$$\begin{array}{rcl}
	(a \natural u)^2
	& = & (a \Apd u^{+1})(u ^{-1} \Ape a) \natural u ^{-0} u ^{+0}\\
	& \stackrel{(\ref{pp1})}{=} & a^2 \natural u ^{-0} \varepsilon(u ^{-1}) u ^{+0} \varepsilon(u ^{+1})\\
	& = & a \natural u^2\\
	& = & a \natural u.
	\end{array}
	$$
\end{proof}

\begin{prop}
	Let $A$ and $\bar{A}$ be as in Proposition \ref{prop5}. Assume that there exist nonzero idempotents $a\in A$ and $u\in \bar{A}$.
  Consider the following conditions:
	\begin{enumerate}\Not{}
		\item $h \Ape a = \varepsilon(h) a$, for all $h \in H$,
		\item $a \Apd h = \varepsilon(h) a$, for all $h \in H$,
		\item $\rho(u) = u \otimes 1_H$,
		\item $\lambda(u) = 1_H \otimes u$.
	\end{enumerate}
	
	In each one of the cases (1)+(2), (1)+(4), (2)+(3) or (3)+(4),  one always has  $(a \natural u)^2 = (a \natural u)$.
\end{prop}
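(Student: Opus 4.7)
My plan is direct computation from the multiplication rule of $A\natural\bar{A}$. The starting point is the expansion
$(a\natural u)^2=(a\Apd u\Up)(u\Upm\Ape a)\natural u\Zpm u\Zp$,
and in each of the four cases I would apply the relevant pair of hypotheses to trivialize one action and one coaction, and then collapse the remaining Sweedler sums via the counit axioms of $\lambda$ and $\rho$ together with the idempotencies $a^2=a$ and $u^2=u$. The target in every case is the same expression $a\cdot a\natural u\cdot u=a\natural u$.

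I would first dispatch the \emph{aligned} cases (1)+(2) and (3)+(4). Case (1)+(2) is essentially a restatement of Proposition \ref{prop5}: by Proposition \ref{prop4} applied to $\Ape$ and its right-hand analogue applied to $\Apd$, the factors $u\Upm\Ape a$ and $a\Apd u\Up$ collapse to $\varepsilon(u\Upm)a$ and $\varepsilon(u\Up)a$, respectively; transferring these scalars across the $\natural$-symbol and using the counit identities $(\varepsilon\otimes I)\lambda(u)=u$ and $(I\otimes\varepsilon)\rho(u)=u$ yields $a\natural u$. Case (3)+(4) is even more direct: substituting $u\Up=u\Upm=1_H$ and $u\Zp=u\Zpm=u$ reduces the product to $(a\Apd 1_H)(1_H\Ape a)\natural u\cdot u=a\natural u$ using the unit axioms for the partial actions and $u^2=u$.

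For the \emph{mixed} cases (1)+(4) and (2)+(3) I would follow the same strategy but with more care. In (1)+(4) I would first apply (1) via Proposition \ref{prop4} to write $u\Upm\Ape a=\varepsilon(u\Upm)a$, then use (4) to substitute the trivial form of $\lambda(u)$; the resulting expression $(a\Apd u\Up)\cdot a\natural u\cdot u\Zp$ I would then reduce using the partial-bicomodule compatibility $(I\otimes\rho)\lambda=(\lambda\otimes I)\rho$ applied to $u$ to transfer the triviality of $\lambda$ to a statement about $\rho(u)$, followed by the counit axiom $(I\otimes\varepsilon)\rho(u)=u$ and the idempotency of $a$. The case (2)+(3) is the symmetric mirror: apply (2) to collapse $a\Apd u\Up=\varepsilon(u\Up)a$ and (3) to substitute the trivial form of $\rho(u)$, then invoke the same compatibility in the other direction.

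The main obstacle is the mixed cases, where the trivializations coming from the hypothesis on $a$ and from the hypothesis on $u$ land on opposite sides of the multiplication, so a reduction of both $A$-factors to $a$ by a single counit collapse is not available. The delicate step is to invoke the bicomodule compatibility at the right moment so as to propagate the trivializing Sweedler form across the tensor before the counit is applied; once this bookkeeping is handled, each case collapses to the common template $a^2\natural u^2=a\natural u$, and the conclusion follows.
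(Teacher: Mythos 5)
Your computations for the aligned cases (1)+(2) and (3)+(4) are correct, and they are exactly the ``straightforward'' verification the paper has in mind (the paper prints no proof beyond that word). The genuine problem is in the mixed cases, and your proposed repair does not close it. Starting from $(a\natural u)^2=(a\Apd u\Up)(u\Upm\Ape a)\natural u\Zpm u\Zp$, the factor $u\Upm\Ape a$ is collapsed by either (1) or (4), while the factor $a\Apd u\Up$ is collapsed by either (2) or (3); so the combinations that work are precisely one hypothesis from $\{(1),(4)\}$ together with one from $\{(2),(3)\}$. In case (1)+(4) both hypotheses act on the \emph{same} factor, and you are left with $(a\Apd u\Up)\,a\natural u\,u\Zp$. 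The bicomodule compatibility cannot rescue this: with $\lambda(u)=1_H\otimes u$, the identity $(I\otimes\rho)\lambda=(\lambda\otimes I)\rho$ only yields $1_H\otimes\rho(u)=(\lambda\otimes I)\rho(u)$, i.e.\ a statement about $\lambda$ applied to the components $u\Zp$ of $\rho(u)$ --- it does not force $\rho(u)=u\otimes 1_H$, which is what you would need.

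In fact (1)+(4) and (2)+(3) are false as printed; the statement should be read with (1)+(3) and (2)+(4) instead. A counterexample assembled from the paper's own Sweedler-algebra examples: take $A=\bar A=\Bbbk$, $H=H_4$, let $\Ape$ be the trivial action $h\Ape 1_\Bbbk=\Ve(h)1_\Bbbk$ and $\lambda$ the trivial coaction $\lambda(1_\Bbbk)=1_H\otimes 1_\Bbbk$ (so (1) and (4) hold), but let $\Apd$ be the partial action with $1_\Bbbk\Apd g=0$, $1_\Bbbk\Apd x=s$, $1_\Bbbk\Apd xg=-s$, and $\rho(1_\Bbbk)=1_\Bbbk\otimes(\frac12+\frac12 g+cx)$. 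All compatibilities hold (one side of each is trivial), $a=u=1_\Bbbk$ are nonzero idempotents, yet $(1_\Bbbk\natural 1_\Bbbk)^2=\bigl(1_\Bbbk\Apd(\tfrac12+\tfrac12 g+cx)\bigr)\natural 1_\Bbbk=(\tfrac12+cs)\natural 1_\Bbbk\neq 1_\Bbbk\natural 1_\Bbbk$ for generic $c,s$. The mirror choice (trivial $\Apd$ and $\rho$, nontrivial $\Ape$ and $\lambda$) refutes (2)+(3). So the correct fix is not a cleverer use of the compatibility but a correction of which pairs of conditions are claimed; once one condition from each of the two groups is assumed, your common template $a^2\natural u^2=a\natural u$ goes through exactly as you wrote it for (1)+(2) and (3)+(4).
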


\begin{proof}
	It is straightforward.
\end{proof}
If, in Proposition \ref{prop5}, the involved left and right (co)actions are all global, then all the conditions  (1)-(4) are satisfied.

\vu

The proof of the next proposition is immediate.

\begin{prop}
	Let $A$ be a partial $H$-bimodule algebra and $\bar{A}$ a partial $H$-bicomodule algebra such that
$(a \natural u)^2 = a \natural u$ for some $a \in A$ and $u \in \bar{A}$.
Then $A \underline{\natural} \bar{A} = \{(a \natural u)(x \natural y)(a \natural u)| \ x \in A,  y \in \bar{A}\}$
is a unital algebra with identity $a \natural u$.\qed
\end{prop}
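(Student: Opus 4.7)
The plan is to recognize this proposition as the standard ``corner at an idempotent'' construction. Write $W = A \natural \bar A$ and $e = a \natural u$. By hypothesis $e^2 = e$, and by the previous proposition $W$ is an associative algebra (though not unital). Then the set described is exactly $eWe := \{ewe \mid w \in W\}$, since $w = x \natural y$ ranges over a spanning set of $W$ (and $eWe$ is clearly closed under ${\Bbbk}$-linear combinations).

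First I would verify closure under multiplication. Given $ewe, ew'e \in eWe$, associativity of $W$ together with $e^2 = e$ gives
\[
(ewe)(ew'e) \;=\; ew(e\cdot e)w'e \;=\; e\,(wew')\,e,
\]
and since $wew' \in W$, this element belongs to $eWe$. Next I would check that $e$ is a two-sided identity on $eWe$: for any $ewe \in eWe$,
\[
e\cdot(ewe) \;=\; (e\cdot e)we \;=\; ewe, \qquad (ewe)\cdot e \;=\; ew(e\cdot e) \;=\; ewe.
\]
Associativity of the restricted multiplication is inherited from $W$, and the ${\Bbbk}$-linear structure is the obvious one.

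There is no real obstacle here; the only subtle point is noticing that one does not need $W$ itself to be unital. The idempotency $e^2 = e$, combined with the associativity established in the preceding proposition, is exactly what makes the corner $eWe$ into a unital associative ${\Bbbk}$-algebra with identity $e = a \natural u$, which is the content of the proposition.
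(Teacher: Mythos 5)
Your argument is correct and is exactly the ``corner $eWe$ at an idempotent'' computation that the paper has in mind when it declares the proof immediate (the paper supplies no written proof). The only point worth noting is that the set in the statement is written with pure tensors $x\natural y$, so one should read it as the image of the linear map $w\mapsto ewe$ on all of $A\otimes\bar A$, which is precisely how you treat it.
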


Note that this last proposition gives a generalization of the smash product as well as of the global $(L,R)$-smash product.

\bibliographystyle{amsalpha}

\end{document}